\UseRawInputEncoding
\documentclass[12pt]{article}
\usepackage[centertags]{amsmath}
\usepackage{amsfonts}
\usepackage{amssymb}
\usepackage{latexsym}
\usepackage{amsthm}
\usepackage{newlfont}
\usepackage{graphicx}
\usepackage{listings}
\usepackage{booktabs}
\usepackage{abstract}
\lstset{numbers=none,language=MATLAB}
\setcounter{page}{1}
\date{}

\bibliographystyle{amsplain}

\newlength{\defbaselineskip}
\setlength{\defbaselineskip}{\baselineskip}
\newcommand{\setlinespacing}[1]%
           {\setlength{\baselineskip}{#1 \defbaselineskip}}

\newcommand{\N}{{\mathbb{N}}}

\newcommand{\actaqed}{\hfill $\actabox$}
{\medskip\noindent \textit{Proof of #1. }}%
{\actaqed \medskip}

\def\D{{\mathcal D}}

\def\C{{\mathcal C}}

\def\H{{\mathcal H}}
\def \Tr{\mathcal T}

\def \cN{\mathcal N}
\def \cR{\mathcal R}

\def \V{\mathcal V}
\def \cV{\mathcal V}

\def\R{{\mathbb R}}
\def\Z{\mathbb Z}

\def \Td{{\mathbb T}^d}
\def \T{\mathbb T}

\def \<{\langle}
\def\>{\rangle}
\def \L{\Lambda}
\def \La{\Lambda}
\def \ep{\epsilon}

\def \ff{\varphi}
\def\al{\alpha}
\def\bt{\beta}
\def\ga{\gamma}
\def\la{\lambda}

\def\bx{\mathbf x}

\def\bk{\mathbf k}
\def\bu{\mathbf u}
\def\bv{\mathbf v}
\def\bw{\mathbf w}

\def\bt{\beta}

\def \csp{\overline{\operatorname{span}}}

\newtheorem{Theorem}{Theorem}[section]
\newtheorem{Lemma}{Lemma}[section]
\newtheorem{Definition}{Definition}[section]
\newtheorem{Proposition}{Proposition}[section]
\newtheorem{Remark}{Remark}[section]

\newtheorem{Corollary}{Corollary}[section]
\numberwithin{equation}{section}

\newcommand{\be}{\begin{equation}}
\newcommand{\ee}{\end{equation}}

\begin{document}

\title{Rate of convergence of Thresholding Greedy Algorithms}
\author{  V.N. Temlyakov\thanks{ Steklov Mathematical Institute of Russian Academy of Sciences, Moscow, Russia; Lomonosov Moscow State University;  Moscow Center of Fundamental and Applied Mathematics;  University of South Carolina.}}
\maketitle
\begin{abstract}
{The rate of convergence of the classical Thresholding Greedy Algorithm with respect to bases is studied in this paper. We bound the error of approximation by the product of both norms --  the norm of $f$ and the $A_1$-norm of $f$. We obtain some results for greedy bases, unconditional bases, and quasi-greedy bases. In particular, we prove that our bounds for the trigonometric basis and for the Haar basis are optimal.
   }
\end{abstract}

\section{Introduction}
\label{I}

This paper is a follow up to the very recent paper \cite{VT196} where we found the rate of convergence of some standard greedy algorithms with respect to arbitrary dictionaries. The new ingredient of the paper \cite{VT196} is that we bounded the error of approximation by the product of both norms --  the norm of $f$ and the $A_1$-norm of $f$. Typically, only the $A_1$-norm of $f$ is used.  
In this paper we focus on a special class of dictionaries, namely, on different types of bases -- greedy bases, unconditional bases, and quasi-greedy bases. We study 
the rate of convergence of the classical Thresholding Greedy Algorithm with respect to bases in 
the style of paper \cite{VT196}. 

 We consider here approximation in uniformly smooth real Banach spaces. For a Banach space $X$ we define the modulus of smoothness
$$
\rho(u) := \rho(u,X):= \sup_{\|x\|=\|y\|=1}(\frac{1}{2}(\|x+uy\|+\|x-uy\|)-1).
$$
A uniformly smooth Banach space is  one with the property
$$
\lim_{u\to 0}\rho(u)/u =0.
$$
We consider here the case $X=L_p$. Let $\Omega$ be a compact subset of $\R^d$ with the probability measure $\mu$. By $L_p$ norm, $1\le p< \infty$, of a function defined on $\Omega$,  we understand
$$
\|f\|_p:=\|f\|_{L_p(\Omega,\mu)} := \left(\int_\Omega |f|^pd\mu\right)^{1/p}.
$$
By $L_\infty$ norm we understand the uniform norm of continuous functions
$$
\|f\|_\infty := \max_{\bx\in\Omega} |f(\bx)|
$$
and with a little abuse of notations we sometimes write $L_\infty(\Omega)$ for the space $\C(\Omega)$ of continuous functions on $\Omega$.

It is well known (see for instance \cite{DGDS}, Lemma B.1) that in the case $X=L_p$, 
$1\le p < \infty$ we have
\be\label{Lprho}
\rho(u,L_p) \le \begin{cases} u^p/p & \text{if}\quad 1\le p\le 2 ,\\
(p-1)u^2/2 & \text{if}\quad 2\le p<\infty. \end{cases} 
\ee 

 We say that a set of elements (functions) $\D$ from $X$ is a dictionary if each $g\in \D$ has norm bounded by   one ($\|g\|\le 1$), and $\csp \D =X$.  
We define the best $m$-term approximation with regard to $\D$ as follows:
$$
\sigma_m(f):=\sigma_m(f,\D)_X :=\inf_{g_1,\dots,g_m} \inf_{c_k} \|f -\sum_{k=1}^m c_kg_k\|_X,
$$
where the infimum is taken over coefficients $c_k$ and sets of $m$ elements $g_1,\dots,g_m$, $g_i\in\D$, $i=1,\dots,m$. 

Let $\D^\pm := \{\pm g\, :\, g\in \D\}$. Denote  the closure of the convex hull of $\D^\pm$   by $A_1(\D)$.   For $f\in X$ we associate the following norm
$$
\|f\|_{A_1(\D)} := \inf \{M:\, f/M\in A_1(\D)\}.
$$
Clearly, $\|f\|_X\le \|f\|_{A_1(\D)}$. 

The following result is the corollary of known results on greedy approximation (see \cite{VT196}, Remark 3.2). 

\begin{Proposition}\label{IP1} Let $X$ be a uniformly smooth Banach space with $\rho(u,X) \le \ga u^q$, $1<q\le 2$. Then for any dictionary $\D$ and any $\al \in [0,1]$ we have for all $f\in X$ such that $\|f\|_{A_1(\D)} <\infty$, $f\neq 0$
\be\label{I1}
\frac{\sigma_m(f,\D)_{X}}{\|f\|_{X}^{1-\al}\|f\|_{A_1(\D)}^\al } \le C(\ga,q) m^{-\al(1-1/q)} .
\ee
\end{Proposition}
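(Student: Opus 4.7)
\medskip
\noindent\textbf{Proof plan for Proposition \ref{IP1}.} The plan is to reduce the assertion to the already established endpoint cases $\alpha=1$ and $\alpha=0$, and then interpolate between them by a direct power-mean argument.

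First, for the endpoint $\alpha=1$ I would invoke the classical bound for greedy approximation in uniformly smooth Banach spaces (this is the content of the result from \cite{VT196} referenced in the statement). Running, say, the Weak Chebyshev Greedy Algorithm with weakness parameter $t=1$ produces, for any $f$ with $\|f\|_{A_1(\D)}<\infty$, an $m$-term approximant $G_m$ satisfying
\[
\|f-G_m\|_X \le C(\gamma,q)\,\|f\|_{A_1(\D)}\, m^{-(1-1/q)},
\]
under the hypothesis $\rho(u,X)\le\gamma u^q$ with $1<q\le 2$. Since $G_m$ is admissible in the definition of $\sigma_m(f,\D)_X$, this yields
\be\label{endpt1}
\sigma_m(f,\D)_X \le C(\gamma,q)\,\|f\|_{A_1(\D)}\, m^{-(1-1/q)}.
\ee
For the other endpoint $\alpha=0$ I would use the trivial estimate obtained by approximating $f$ by $0$, namely
\be\label{endpt0}
\sigma_m(f,\D)_X \le \|f\|_X.
\ee

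Now I would interpolate: for any $\alpha\in[0,1]$, write $\sigma_m = \sigma_m^{\,\alpha}\cdot\sigma_m^{\,1-\alpha}$ and bound the first factor by \eqref{endpt1} and the second by \eqref{endpt0}:
\[
\sigma_m(f,\D)_X \le \bigl(C(\gamma,q)\|f\|_{A_1(\D)} m^{-(1-1/q)}\bigr)^{\alpha}\cdot\|f\|_X^{1-\alpha} = C(\gamma,q)^{\alpha}\|f\|_X^{1-\alpha}\|f\|_{A_1(\D)}^{\alpha}\,m^{-\alpha(1-1/q)}.
\]
Absorbing $C(\gamma,q)^{\alpha}\le\max(1,C(\gamma,q))$ into the constant gives \eqref{I1}.

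There is essentially no obstacle here beyond having the right endpoint bound in hand; the nontrivial analytic ingredient is entirely confined to \eqref{endpt1}, which is the classical convergence rate for greedy algorithms in uniformly smooth spaces and is quoted from \cite{VT196}. The remaining ``interpolation'' step is purely algebraic and does not require any additional structure on $\D$ or on $X$, which is why the proposition holds verbatim for arbitrary dictionaries. The only mild subtlety worth flagging is that the multiplicative splitting $\sigma_m=\sigma_m^{\alpha}\sigma_m^{1-\alpha}$ loses a factor $C(\gamma,q)^{\alpha}$ rather than $C(\gamma,q)$; this is harmless since $\alpha\in[0,1]$, but it is the place where one must be slightly careful about the dependence of the final constant on $\gamma$ and $q$.
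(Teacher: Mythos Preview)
Your proposal is correct and matches the paper's treatment: the paper does not give an independent proof of this proposition but simply records it as a corollary of known results from \cite{VT196}, Remark~3.2, where the nontrivial content is exactly the endpoint bound \eqref{endpt1}. Your explicit splitting $\sigma_m=\sigma_m^{\alpha}\sigma_m^{1-\alpha}$ together with the trivial bound $\sigma_m\le\|f\|_X$ is precisely the intended derivation.
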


Denote $p^* := \min(p,2)$. Then Proposition \ref{IP1} and relation (\ref{Lprho}) imply the following Corollary \ref{IC1}.

\begin{Corollary}\label{IC1} Let $1<p<\infty$.  Then for any dictionary $\D\subset L_p(\Omega,\mu)$ and any $\al \in [0,1]$ we have for all $f\in L_p(\Omega,\mu)$ such that $\|f\|_{A_1(\D)} <\infty$, $f\neq 0$
\be\label{I1c}
\frac{\sigma_m(f,\D)_{p}}{\|f\|_{p}^{1-\al}\|f\|_{A_1(\D)}^\al } \le C(p) m^{-\al(1-1/p^*)} .
\ee
\end{Corollary}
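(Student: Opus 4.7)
The corollary follows by direct specialization of Proposition \ref{IP1} to $X = L_p(\Omega,\mu)$, with the value of $q$ read off from (\ref{Lprho}). I would split the argument into the two cases contained in (\ref{Lprho}) and check that in each case the exponent produced by Proposition \ref{IP1} is exactly $-\alpha(1-1/p^*)$.

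Case $1 < p \le 2$: here (\ref{Lprho}) gives $\rho(u,L_p) \le \gamma u^q$ with $q = p$ and $\gamma = 1/p$, and the required $1 < q \le 2$ is satisfied. Applying Proposition \ref{IP1} gives $\sigma_m(f,\D)_p / (\|f\|_p^{1-\alpha}\|f\|_{A_1(\D)}^\alpha) \le C(1/p,p)\, m^{-\alpha(1-1/p)}$. Since $p^* = p$ in this range, the exponent matches the one in (\ref{I1c}), and the constant $C(1/p,p)$ can be absorbed into a constant $C(p)$ depending only on $p$.

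Case $2 \le p < \infty$: here (\ref{Lprho}) gives $\rho(u,L_p) \le \gamma u^q$ with $q = 2$ and $\gamma = (p-1)/2$, and again $1 < q \le 2$. Proposition \ref{IP1} then yields the exponent $-\alpha(1-1/2) = -\alpha/2$, while the constant is $C((p-1)/2, 2)$, which depends only on $p$. Since $p^* = 2$ in this range, $-\alpha(1-1/p^*) = -\alpha/2$, so the bound (\ref{I1c}) is recovered.

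There is no real obstacle: the whole content is a bookkeeping exercise that identifies the correct $q$ from (\ref{Lprho}) in each $p$-range and verifies that in both cases the resulting exponent equals $-\alpha(1-1/p^*)$. The only mild point worth noting is that $p = 1$ is excluded because (\ref{Lprho}) then gives $q = 1$, which is outside the range $1 < q \le 2$ required by Proposition \ref{IP1}; this is consistent with the hypothesis $1 < p < \infty$ in the statement of the corollary.
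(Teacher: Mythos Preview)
Your argument is correct and is precisely the approach the paper indicates: Corollary~\ref{IC1} is obtained by specializing Proposition~\ref{IP1} to $X=L_p$ using the smoothness bounds (\ref{Lprho}), with $q=p$ for $1<p\le 2$ and $q=2$ for $2\le p<\infty$, which yields the exponent $-\alpha(1-1/p^*)$ in both cases. The paper states this derivation in a single line without writing out the two cases; your expanded version is fully consistent with it.
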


Let a Banach space $X$, with a basis (Schauder basis) $\Psi =\{\psi_k\}_{k=1}^\infty$,    be given. We assume that $0<c_0\le \|\psi_k\|_X\le 1$, $k=1,2,\dots$,  and consider the following theoretical greedy algorithm. For a given element $f\in X$ we consider the expansion
\begin{equation}\label{2c1.1}
f = \sum_{k=1}^\infty c_k(f,\Psi)\psi_k. 
\end{equation}
For an element $f\in X$ we say that a permutation $\varrho$ of the positive integers   is  decreasing  if 
\begin{equation}\label{2c1.2}
|c_{k_1}(f,\Psi) |\ge |c_{k_2}(f,\Psi) | \ge \dots ,  
\end{equation}
 where $\varrho(j)=k_j$, $j=1,2,\dots$, and write $\varrho \in D(f)$.
If the   inequalities are strict in (\ref{2c1.2}), then $D(f)$ consists of only one permutation. We define the $m$th greedy approximant of $f$, with regard to the basis $\Psi$ corresponding to a permutation $\varrho \in D(f)$, by the formula
$$
G_m(f):=G_m(f,\Psi)  :=G_m(f,\Psi,\varrho) := \sum_{j=1}^m c_{k_j}(f,\Psi)\psi_{k_j}.
$$

The above algorithm $G_m(\cdot,\Psi)$ is a simple algorithm, which describes the theoretical scheme  for $m$-term approximation of an element $f$. This algorithm is called the  Thresholding Greedy Algorithm (TGA) or simply the Greedy Algorithm (GA). In order to understand the efficiency of this algorithm we compare its accuracy with the best-possible accuracy when an approximant is a linear combination of $m$ terms from $\Psi$.   The best we can achieve with the algorithm $G_m$ is
$$
\|f-G_m(f,\Psi,\varrho)\|_X = \sigma_m(f,\Psi)_X,
$$
or the slightly weaker
\begin{equation}\label{2c1.3}
\|f-G_m(f,\Psi,\varrho)\|_X \le G\sigma_m(f,\Psi)_X,  
\end{equation}
for all elements $f\in X$, and with a constant $G=C(X,\Psi)$ independent of $f$ and $m$.
It is clear that, when $X=H$ is a Hilbert space and $\mathcal B$ is an orthonormal basis, we have
$$
\|f-G_m(f,\mathcal B,\varrho)\|_H = \sigma_m(f,\mathcal B)_H.
$$

We consider the following asymptotic characteristic of the TGA with respect to a given basis $\Psi$
$$
\ga_m(\al,X,\Psi) := \sup_{f\in X: \|f\|_{A_1(\Psi)}<\infty} \sup_{\varrho\in D(f)} \frac{\|f-G_m(f,\Psi,\varrho)\|_X}{\|f\|_X^{1-\al} \|f\|_{A_1(\Psi)}^\al},\quad \al\in [0,1].
$$
We mostly concentrate on the case $X=L_p$, $1\le p\le \infty$. In such a case we write $\ga_m(\al,p,\Psi)$.  

 The following concept of a greedy basis has been introduced in \cite{KonT}.

\begin{Definition}\label{ID1}
We call a basis $\Psi$ a greedy basis if for
every $f\in X$ there exists a permutation $\varrho \in D(f)$ such
that
\be\label{gr}
\|f-G_m(f,\Psi,\varrho)\|_X\leq C\sigma_m (f,\Psi)_X
\ee
with a constant $C$ independent of $f$ and $m$.
\end{Definition}
It is known (see \cite{KonT}) that for a greedy basis $\Psi$ inequality (\ref{gr}) holds for all permutations $\varrho \in D(f)$. Therefore, 
Definition \ref{ID1} and Corollary \ref{IC1}  imply the following bounds for the greedy bases. 

\begin{Theorem}\label{IT1} Let $1<p<\infty$.  Then for any greedy basis $\Psi$ of $L_p(\Omega,\mu)$ and any $\al \in [0,1]$ we have  
\be\label{I7}
\ga_m(\al,L_p(\Omega,\mu),\Psi) \le C(p,\Psi) m^{-\al(1-1/p^*)} .
\ee
\end{Theorem}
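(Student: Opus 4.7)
The plan is to combine Corollary \ref{IC1} with the defining property of a greedy basis, since these two ingredients already cover the statement essentially verbatim. First I would observe that $\Psi$ is a dictionary in $L_p(\Omega,\mu)$ in the sense of the paper: by the standing assumption $\|\psi_k\|_X \le 1$, and by hypothesis $\csp\Psi = X = L_p(\Omega,\mu)$, so the definition of dictionary applies with $\D := \Psi$. Consequently Corollary \ref{IC1} gives, for every $f\in L_p$ with $\|f\|_{A_1(\Psi)} < \infty$ and every $\al\in[0,1]$,
\be\label{planA}
\sigma_m(f,\Psi)_p \le C(p)\,\|f\|_p^{1-\al}\|f\|_{A_1(\Psi)}^\al\, m^{-\al(1-1/p^*)}.
\ee

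Next I would invoke the greedy basis property. By Definition \ref{ID1}, there exists some $\varrho\in D(f)$ realizing (\ref{gr}); by the remark following the definition (cited from \cite{KonT}), the same near-best inequality in fact holds for \emph{every} permutation $\varrho\in D(f)$. Thus with a constant $C = C(X,\Psi)$ independent of $f$ and $m$,
\be\label{planB}
\sup_{\varrho\in D(f)} \|f - G_m(f,\Psi,\varrho)\|_p \le C\,\sigma_m(f,\Psi)_p.
\ee

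Combining (\ref{planA}) and (\ref{planB}) and dividing by $\|f\|_p^{1-\al}\|f\|_{A_1(\Psi)}^\al$ yields, uniformly over admissible $f$ and $\varrho$, the bound $C(p,\Psi)\,m^{-\al(1-1/p^*)}$. Taking the supremum over $f$ and $\varrho$ in the definition of $\ga_m(\al,L_p(\Omega,\mu),\Psi)$ gives (\ref{I7}).

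There is no real obstacle here, since all of the work was done upstream: Corollary \ref{IC1} handles the $m$-term approximation rate in terms of the mixed $\|f\|_p^{1-\al}\|f\|_{A_1}^\al$ quantity, and the greedy basis property reduces the analysis of the TGA to that of $\sigma_m$. The only thing to double-check while writing is that the greedy constant is independent of the particular decreasing rearrangement $\varrho$, so that the supremum over $\varrho$ in the definition of $\ga_m$ is harmless; this is exactly what the result of \cite{KonT} quoted after Definition \ref{ID1} provides.
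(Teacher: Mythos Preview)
Your proposal is correct and follows exactly the approach indicated in the paper: the theorem is stated there as an immediate consequence of Definition~\ref{ID1} (together with the remark from \cite{KonT} that the greedy inequality holds for every $\varrho\in D(f)$) and Corollary~\ref{IC1}. There is nothing to add.
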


In this paper we extend Theorem \ref{IT1} in different directions. In Section \ref{uq} we consider the Haar basis, which is a greedy basis for the $L_p$, $1<p<\infty$, (see \cite{T14}). We find the right order of 
the $\ga_m(\al,L_p([0,1],\mu),\Psi)$ in the case of $\Psi$ being the Haar basis $\H_p$ and $\mu=\la$ being the Lebesgue measure. Those bounds are better than the corresponding bounds in Theorem \ref{IT1} in the case $2<p<\infty$. The reader can find results on greedy approximation of smooth multivariate functions with respect to bases similar to the multivariate Haar basis  in \cite{VT69} and \cite{VT77}.

We consider bases, which are not greedy bases. In Section \ref{Tr} we find the right order of the quantity $\ga_m(\al,L_p(\T^d,\la),\Psi)$ 
in the case, when $\Psi$ is the trigonometric system. The reader can find a survey of results on the rate of best $m$-term trigonometric approximations of smooth multivariate functions in \cite{VTbookMA}, Ch.9.

It is known (see \cite{KonT}) that the greedy bases are 
exactly those bases, which are unconditional and democratic (see the definitions below in Section \ref{uq}). In Section \ref{uq} (see Theorems \ref{UT1} and \ref{UT1a}) we prove that Theorem \ref{IT1}
holds under assumption that $\Psi$ is an unconditional basis. 

In Subsection \ref{Q} we discuss a wider 
class of bases than the class of unconditional bases -- the quasi-greedy bases. It turns out that Theorem
\ref{IT1} can be extended to the class of quasi-greedy bases (see Theorem \ref{QT2} below). Finally, in 
Subsection \ref{Q} we prove that some error bounds can be improved for a subclass of quasi-greedy bases, namely, for the uniformly bounded quasi-greedy bases. 

We use $C$, $C(p,d)$, {\it etc.}, to denote various constants, the arguments indicating dependence on other parameters. We use the following symbols for brevity. For two non-negative sequences $a=\{a_n\}_{n=1}^\infty$ and $b=\{b_n\}_{n=1}^\infty$, the relation, or order inequality, $a_n \ll b_n$ means that there is a number $C(a,b)$ such that, for all $n$, we have $ a_n\le C(a,b)b_n$; and the relation $a_n\asymp b_n$ means that $a_n\ll b_n$ and $b_n\ll a_n$. 

\section{Trigonometric system}
\label{Tr}

In this section  we consider the case $X=L_p({\mathbb T}^d,\la)$, $1\le p \le \infty$, $\la$ is the normalized Lebesgue measure on $\T^d$, $\Psi ={\mathcal T}^d := \{e^{i(\bk,\bx)}\}_{\bk\in {\mathbb Z}^d}$ is the trigonometric system. It is convenient for us to deal with the complex trigonometric system. Results of this section hold for the real trigonometric system as well. Moreover, note that we can use the general results for real Banach spaces and real trigonometric system and derive from those results the corresponding results for the complex trigonometric system. We define
$$
\|f\|_{A_1(\Tr^d)}:= \sum_{\bk\in \Z^d} |\hat f(\bk)|,\quad \hat f(\bk) := (2\pi)^{-d}\int_{\T^d} f(\bx)e^{-i(\bk,\bx)}d\bx.
$$

The following Theorem \ref{TrT1} 
is known (see \cite{T1} and \cite{VTsparse}, p.25, Theorem 2.2.1).

\begin {Theorem}\label{TrT1}  
For each $f \in L_p(\Td)$ we have
$$
\|f - G_m(f,{\mathcal T}^d)\|_p \le (1+3m^{h(p)})\sigma_m(f,{\mathcal T}^d)_p , \quad 1 \le p \le \infty,
$$
where $h(p) := |1/2-1/p|.$
\end{Theorem}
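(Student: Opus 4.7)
The plan is the following. Fix a near-best $m$-term trigonometric polynomial $g = \sum_{\bk \in \Lambda}\hat g(\bk) e^{i(\bk,\bx)}$ with $|\Lambda|\le m$ and $\|f-g\|_p \le \sigma_m(f,\mathcal T^d)_p+\varepsilon$ (pad $\Lambda$ with zero coefficients so that $|\Lambda|=m$), and set $h:=f-g$. Let $T$ denote the spectrum of the greedy approximant, $|T|=m$, and write $S_E u := \sum_{\bk\in E}\hat u(\bk)e^{i(\bk,\bx)}$ for the Fourier projection onto a finite set $E\subset\Z^d$. Adding and subtracting $S_T(g)$ yields the identity
$$
f-G_m(f) \;=\; h \,+\, S_{\Lambda\setminus T}(g)\,-\, S_T(h),
$$
so by the triangle inequality it suffices to bound each of $\|S_T(h)\|_p$ and $\|S_{\Lambda\setminus T}(g)\|_p$ by a constant multiple of $m^{h(p)}\|h\|_p$, with constants summing to $3$.

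The main tool is the following spectral projection estimate, to be proved directly from Fourier analysis: for every $E\subset\Z^d$ with $|E|\le n$ and every $u\in L_p(\T^d)$,
$$
\|S_E u\|_p \;\le\; n^{h(p)}\|u\|_p.
$$
For $p\ge 2$ this follows from the Nikolskii-type bound $\|t\|_p \le n^{1/2-1/p}\|t\|_2$ for trigonometric polynomials with spectrum of size $\le n$ (derived from $\|t\|_p\le\|\hat t\|_{p'}$ and Cauchy--Schwarz on the Fourier side), Parseval, and $\|u\|_2\le\|u\|_p$. For $p\le 2$ it follows from $\|t\|_p\le\|t\|_2$, from Hölder to get $\bigl(\sum_{\bk\in E}|\hat u(\bk)|^2\bigr)^{1/2}\le n^{h(p)}\bigl(\sum_{\bk\in E}|\hat u(\bk)|^{p'}\bigr)^{1/p'}$, and from the Hausdorff--Young inequality $\|\hat u\|_{p'}\le\|u\|_p$. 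Applied to $u=h$, $E=T$, $n=m$ it gives $\|S_T(h)\|_p \le m^{h(p)}\|h\|_p$.

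The piece $\|S_{\Lambda\setminus T}(g)\|_p$ cannot be handled by applying the same lemma to $g$, which would only yield the useless $m^{h(p)}\|g\|_p$. Instead, I would exploit the greedy selection: since $|\Lambda\setminus T|\le|T\setminus\Lambda|$ (both being equal to $m-|\Lambda\cap T|$) and $\hat g$ vanishes on $T\setminus\Lambda$, the thresholding property gives $|\hat f(\bk)|\le|\hat f(\bn)|=|\hat h(\bn)|$ for every $\bk\in\Lambda\setminus T$ and $\bn\in T\setminus\Lambda$. Averaging and rearranging then yields $\sum_{\bk\in\Lambda\setminus T}|\hat f(\bk)|^r\le\sum_{\bn\in T\setminus\Lambda}|\hat h(\bn)|^r$ for every $r>0$, and combining with $|\hat g|\le|\hat f|+|\hat h|$ controls the relevant sum of $|\hat g|^r$ on $\Lambda\setminus T$ by $2^r$ times the corresponding sum of $|\hat h|^r$. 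Taking $r=2$ (for $p\ge 2$) gives $\|S_{\Lambda\setminus T}(g)\|_2\le 2\|h\|_2$, while taking $r=p'$ (for $p\le 2$) controls $\bigl(\sum_{\bk\in\Lambda\setminus T}|\hat g(\bk)|^{p'}\bigr)^{1/p'}$ by $2\|h\|_p$ via Hausdorff--Young. Running the same $L_2$-passage used in the projection lemma then produces $\|S_{\Lambda\setminus T}(g)\|_p\le 2m^{h(p)}\|h\|_p$, and summing the three bounds (then letting $\varepsilon\to 0$) gives the claimed $(1+3m^{h(p)})\sigma_m(f,\mathcal T^d)_p$. The most delicate step is the $p\le 2$ case, where Parseval applied to $h$ is not strong enough and one has to combine Hausdorff--Young with Hölder on the Fourier side, paying the $m^{h(p)}$ factor when lifting from $L_2$ to $L_p$ on trigonometric polynomials supported on at most $m$ frequencies.
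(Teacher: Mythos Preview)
The paper does not actually prove this theorem; it is quoted as a known result with references to \cite{T1} and \cite{VTsparse}, p.~25, Theorem~2.2.1. Your argument is correct and is essentially the original proof from those sources: the same decomposition $f-G_m(f)=h+S_{\Lambda\setminus T}(g)-S_T(h)$, the same projection bound $\|S_E u\|_p\le |E|^{h(p)}\|u\|_p$ obtained from Hausdorff--Young together with H\"older on the Fourier side, and the same exploitation of the greedy selection (via $|\Lambda\setminus T|=|T\setminus\Lambda|$ and $\hat g|_{T\setminus\Lambda}=0$) to get $\|S_{\Lambda\setminus T}(g)\|_p\le 2m^{h(p)}\|h\|_p$.
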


We begin with the case $2\le p< \infty$. 

\begin{Theorem}\label{TrT2} Let $2\le p < \infty$. Then
$$
\ga_m(\al,p,\Tr^d) \asymp m^{h(p)-\al/2},\quad \al\in [0,1].
$$
\end{Theorem}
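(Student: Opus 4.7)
The plan is to split the claim $\gamma_m(\alpha,p,\mathcal T^d) \asymp m^{h(p)-\alpha/2}$ into upper and lower bounds and treat each separately.

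\medskip

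\textbf{Upper bound.} I would combine Theorem \ref{TrT1} with Corollary \ref{IC1}. The former gives the Lebesgue-type inequality
\[
\|f - G_m(f,\mathcal T^d)\|_p \le (1 + 3m^{h(p)})\,\sigma_m(f,\mathcal T^d)_p,
\]
while the latter (applied with $p^* = 2$, since $p \ge 2$) gives
\[
\sigma_m(f,\mathcal T^d)_p \le C(p)\, m^{-\alpha/2}\, \|f\|_p^{1-\alpha}\, \|f\|_{A_1(\mathcal T^d)}^\alpha.
\]
Multiplying these two inequalities yields $\gamma_m(\alpha,p,\mathcal T^d) \ll m^{h(p)-\alpha/2}$ immediately.

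\medskip

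\textbf{Lower bound.} It suffices to work in dimension $d=1$, since the multivariate case then follows by considering test polynomials depending only on one coordinate. The goal is to produce, for each $m$, a single polynomial $f_m$ that witnesses the lower bound uniformly for every $\alpha\in[0,1]$. In view of the identity
\[
\frac{\|f_m - G_m(f_m)\|_p}{\|f_m\|_p^{1-\alpha}\,\|f_m\|_{A_1(\mathcal T)}^\alpha}
= \Bigl(\frac{\|f_m - G_m(f_m)\|_p}{\|f_m\|_p}\Bigr)\Bigl(\frac{\|f_m\|_p}{\|f_m\|_{A_1(\mathcal T)}}\Bigr)^\alpha,
\]
it suffices to find $f_m$ satisfying simultaneously $\|f_m - G_m f_m\|_p \gtrsim m^{h(p)}\|f_m\|_p$ and $\|f_m\|_{A_1(\mathcal T)} \lesssim m^{1/2}\,\|f_m\|_p$. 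The first property is the classical sharpness of the trigonometric TGA Lebesgue constant in $L_p$, $p\ne 2$, proved in \cite{T1} and \cite{VTsparse}, Theorem 2.2.1. The second property is automatic for the standard extremal construction, since the $f_m$ produced there is a polynomial with $O(m)$ Fourier coefficients of magnitude $\asymp 1$; this gives $\|f_m\|_{A_1(\mathcal T)} \lesssim m$ and $\|f_m\|_p \ge \|f_m\|_2 \asymp m^{1/2}$.

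\medskip

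The main obstacle is the first property: the construction of $f_m$ with $\|f_m - G_m f_m\|_p \gtrsim m^{h(p)}\|f_m\|_p$. A naive candidate such as $f_m = D_m + Q$, where $D_m$ is the Dirichlet kernel and $Q$ is a correction supported on frequencies disjoint from $\{1,\dots,m\}$, is ruled out by duality: testing against $D_m$ itself gives $\|D_m + Q\|_p \gtrsim m^{1-1/p}$ uniformly in $Q$, so $\|f_m\|_p$ cannot be driven down to the required $m^{1/2}$ scale by any such $Q$. A more delicate frequency arrangement is needed, which I take directly from the cited literature rather than reconstructing.
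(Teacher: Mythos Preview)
Your approach matches the paper's. The upper bound is identical --- combine Theorem~\ref{TrT1} with Corollary~\ref{IC1}. For the lower bound the paper also uses the extremal example underlying the sharpness result you cite, but it carries it out explicitly with the Rudin--Shapiro polynomial $\mathcal R_m(x)=\sum_{|k|\le m}\epsilon_k e^{ikx}$: choose a realization of $G_m$ that removes all frequencies with $\epsilon_k$ of the minority sign, so the remainder has all coefficients of the same sign and hence $\|\mathcal R_m-G_m(\mathcal R_m)\|_\infty\ge m+1$; the Nikol'skii inequality then gives $\|\mathcal R_m-G_m(\mathcal R_m)\|_p\gtrsim m^{1-1/p}$, and one combines this with $\|\mathcal R_m\|_p\le\|\mathcal R_m\|_\infty\lesssim m^{1/2}$ and $\|\mathcal R_m\|_{A_1}=2m+1$.

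One point to tighten in your write-up: the ``sharpness of the Lebesgue constant'' in \cite{T1} is the statement $\sup_f \|f-G_m f\|_p/\sigma_m(f)_p\gtrsim m^{h(p)}$, which is weaker than your ``first property'' $\|f_m-G_m f_m\|_p\gtrsim m^{h(p)}\|f_m\|_p$ (since $\sigma_m\le\|f\|_p$ goes the wrong way). You need the extra fact that for the specific test function $\|f_m\|_p\lesssim m^{1/2}$ --- this is exactly the Rudin--Shapiro bound $\|\mathcal R_m\|_\infty\lesssim m^{1/2}$, and it does \emph{not} follow from your observation $\|f_m\|_p\ge\|f_m\|_2\asymp m^{1/2}$, which gives only the lower bound. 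So the argument is correct once you name the example, but as written the citation does not quite deliver what you claim.
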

\begin{proof} The upper bound follows from Theorem \ref{TrT1} and Corollary  \ref{IC1}.  We now prove the lower bound. Clearly, it is sufficient to carry out the proof in the 
case $d=1$. We use  
  the Rudin-Shapiro polynomials (see, for instance, \cite{VTsparse}, Appendix, Section 7.4):
\begin{equation}\label{Tr1}
{\mathcal R}_m(x) = \sum_{|k| \le m} \epsilon_k e^{ikx}, \quad \epsilon_k = \pm 1,
\quad x \in \T,  
\end{equation} 
which satisfy the estimate
\begin{equation}\label{Tr2}
\|{\mathcal R}_m\|_\infty \le C_1m^{1/2} ,  
\end{equation}
with an absolute constant $C_1$. Denote for  $s = \pm 1$
$$
\La_{s} := \{ k : \epsilon_k = s \}.
$$  
Let $s = \pm 1$ be such that $|\La_s| > |\La_{-s}|$. Then $|\La_{-s}|\le m$.  Let $G_m(\cR_m)$ be a realization of the TGA  applied to $f=\cR_m$ with all frequencies in $\La_{-s}$ and maybe with some 
frequencies in $\La_s$. Denote $f_m := \cR_m - G_m(\cR_m)$. Then all frequencies of $f_m$ are 
in $\La_s$ and, therefore,
\begin{equation}\label{Tr5}
\|f_m\|_\infty \ge  m+1 . 
\end{equation}
 By the Nikol'skii inequality for trigonometric polynomials (see \cite{VTsparse}, Appendix, p.253, Theorem 7.5.4) 
 the relation (\ref{Tr5}) implies
\begin{equation}\label{Tr7}
\|f_m\|_p \ge C_2m^{-1/p}\|f_m\|_\infty 
\ge C_2m^{1-1/p} .  
\end{equation}
Clearly,   we have
\be\label{Tr8}
\|\mathcal R_{m}\|_{A_1(\Tr)} = 2m+1.
\ee
Comparing (\ref{Tr2}), (\ref{Tr7}), and (\ref{Tr8}) we get the required estimate in the case 
$2 \le p < \infty$.
\end{proof}
The following Corollary \ref{TrC1} might be of independent interest.
\begin{Corollary}\label{TrC1} Let $2\le p< \infty$. Then for any $f\in L_p$ and for each $m\in \N$ we have
$$
\|G_m(f,\Tr^d)\|_p \le C(p)\|f\|_p^{2/p}\|f\|_{A_1(\Tr^d)}^{1-2/p}.
$$
\end{Corollary}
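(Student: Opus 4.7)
The plan is to derive this purely from the upper bound of Theorem \ref{TrT2} via the triangle inequality with an optimally chosen exponent $\al$. Since $\|G_m(f,\Tr^d)\|_p \le \|f\|_p + \|f - G_m(f,\Tr^d)\|_p$, the task reduces to bounding the TGA error in a way that is independent of $m$, which is possible precisely when the exponent of $m$ in Theorem \ref{TrT2} vanishes.

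For $2 \le p < \infty$ we have $h(p) = 1/2 - 1/p$, so Theorem \ref{TrT2} yields
$$
\|f - G_m(f,\Tr^d)\|_p \le C(p)\, m^{\,(1-\al)/2 - 1/p}\,\|f\|_p^{1-\al}\,\|f\|_{A_1(\Tr^d)}^{\al}.
$$
Choosing $\al = 1 - 2/p \in [0,1]$ makes the exponent of $m$ equal to zero, and gives
$$
\|f - G_m(f,\Tr^d)\|_p \le C(p)\,\|f\|_p^{2/p}\,\|f\|_{A_1(\Tr^d)}^{1-2/p}.
$$
Combining with the trivial bound $\|f\|_p \le \|f\|_p^{2/p}\|f\|_p^{1-2/p} \le \|f\|_p^{2/p}\|f\|_{A_1(\Tr^d)}^{1-2/p}$ (using $\|f\|_p \le \|f\|_{A_1(\Tr^d)}$) and the triangle inequality yields the claimed inequality with constant $1 + C(p)$.

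There is no real obstacle here; the only thing to check is that the chosen $\al$ lies in $[0,1]$ for all $p \in [2,\infty)$, which is immediate, and that Theorem \ref{TrT2} has already been proved (so no circularity). The corollary is genuinely just the specialization of Theorem \ref{TrT2} at the critical value of $\al$ that balances $\|f\|_p$ and $\|f\|_{A_1(\Tr^d)}$ so as to eliminate the $m$-dependence.
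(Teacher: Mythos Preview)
Your proof is correct and matches the paper's approach: the paper's one-line proof also applies Theorem \ref{TrT2} with $\al = 2h(p) = 1 - 2/p$ (the same value you chose), noting that the implied constant depends only on $p$. You have simply spelled out the triangle-inequality step and the bound $\|f\|_p \le \|f\|_p^{2/p}\|f\|_{A_1(\Tr^d)}^{1-2/p}$ that the paper leaves implicit.
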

\begin{proof} It follows from Theorem \ref{TrT2} with $\al = 2h(p)$ and an observation that the constant 
in the upper bound of Theorem \ref{TrT2} may depend only on $p$. 
\end{proof} 

We now proceed to the case $1\le p\le 2$.

\begin{Theorem}\label{TrT3} Let $1\le p \le 2$. Then
$$
\ga_m(\al,p,\Tr^d) \asymp m^{h(p)-\al/p},\quad \al\in [0,1].
$$
\end{Theorem}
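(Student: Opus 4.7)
\emph{Upper bound.} I would interpolate between the endpoint estimates at $\al=0$ and $\al=1$. Theorem \ref{TrT1} gives the $\al=0$ bound $\|f-G_m(f,\Tr^d)\|_p\le (1+3m^{h(p)})\|f\|_p$. For $\al=1$, on the probability space $\T^d$ one has $\|g\|_p\le\|g\|_2$ for $p\le 2$; combined with the standard estimate $\|f-G_m(f,\Tr^d)\|_2 \le \|f\|_{A_1(\Tr^d)}/\sqrt m$ (which follows from the bound $|\hat f(k_m)|\le \|f\|_{A_1}/m$ on the $m$th largest coefficient and summing the squares of the tail against $\sum_{j>m}|\hat f(k_j)|\le \|f\|_{A_1}$), this yields $\|f - G_m\|_p \le \|f\|_{A_1}/\sqrt m$. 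Therefore $\|f-G_m\|_p = (\|f-G_m\|_p)^{1-\al}(\|f-G_m\|_p)^{\al} \le (Cm^{h(p)}\|f\|_p)^{1-\al}(\|f\|_{A_1}/\sqrt m)^{\al} = C' m^{h(p)(1-\al)-\al/2}\|f\|_p^{1-\al}\|f\|_{A_1}^{\al}$. A direct computation shows $h(p)(1-\al)-\al/2 = (1-\al)/p-1/2 = h(p)-\al/p$ for $p\le 2$, which gives the upper bound.

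\emph{Lower bound.} For the matching lower bound I would use the de la Vallée Poussin kernel $V_m$, viewed as an element of $L_p(\T^d)$ depending only on $x_1$ (which reduces the problem to $d=1$). Its relevant properties are $\|V_m\|_p\asymp m^{1-1/p}$ for all $p\ge 1$, $\|V_m\|_{A_1(\Tr^d)}\asymp m$, and the $2m+1$ central coefficients $\hat V_m(k)$ with $|k|\le m$ are all equal to one. Hence any permutation selecting $m$ of these central frequencies first lies in $D(V_m)$, so I would pick $G_m=\sum_{k\in S}e^{ikx_1}$ where $S$ is (a size-$m$ adjustment of) the set $\La_-$ of indices at which the Rudin--Shapiro polynomial $\mathcal R_m$ has coefficient $-1$. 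The resulting remainder then equals, up to $O(1)$ extra terms, $V_m-G_m = A_+ + (V_m-D_m)$, where $A_+:=\sum_{k\in\La_+}e^{ikx}=(D_m+\mathcal R_m)/2$ and $D_m$ is the Dirichlet kernel. The crucial step is the lower bound $\|A_+\|_1\ge c\sqrt m$: this comes from $\|\mathcal R_m\|_1\asymp\sqrt m$ (itself from $\|\mathcal R_m\|_2=\sqrt{2m+1}$, $\|\mathcal R_m\|_\infty\le C\sqrt m$, and $\|g\|_1\ge\|g\|_2^2/\|g\|_\infty$) combined with $\|D_m\|_1\asymp\log m$ via the triangle inequality: $\|A_+\|_1\ge(\|\mathcal R_m\|_1-\|D_m\|_1)/2\ge c\sqrt m$ for large $m$. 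Since also $\|V_m-D_m\|_1=O(\log m)$, another triangle inequality gives $\|V_m-G_m\|_p\ge\|V_m-G_m\|_1\ge c\sqrt m$ for all $p\ge 1$, and substitution yields the ratio $c\sqrt m/(m^{(1-1/p)(1-\al)}m^{\al}) = c\, m^{h(p)-\al/p}$, matching the upper bound.

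\emph{Main obstacle.} The principal difficulty is that the "natural" test function $\mathcal R_m$ from Theorem \ref{TrT2} alone falls short for $p<2$: direct substitution produces only the weaker exponent $h(p)-\al/2$, since $\|\mathcal R_m\|_p\asymp\sqrt m$ leaves insufficient gap between the $L_p$- and $A_1$-norms of the test function. Switching to $V_m$ opens the required gap $\|V_m\|_{A_1}/\|V_m\|_p\asymp m^{1/p}$, but one then has to engineer the TGA realization very carefully --- using the Rudin--Shapiro sign pattern to choose which $m$ central frequencies to remove --- so that the remainder retains an $L_1$-norm of order $\sqrt m$ rather than collapsing toward $O(1)$.
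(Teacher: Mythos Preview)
Your upper bound is correct and essentially identical to the paper's: both interpolate between the $\al=0$ bound from Theorem~\ref{TrT1} and the $\al=1$ bound $\|f-G_m\|_p\le\|f-G_m\|_2\le m^{-1/2}\|f\|_{A_1(\Tr^d)}$ (the paper phrases the latter via Lemma~\ref{mLqp}, which is the same tail estimate you sketch).

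For the lower bound you choose the same test function $\mathcal V_m$ and the same TGA realization (removing the frequencies in $\La_{-}$) as the paper, but you diverge in how you establish $\|f_m\|_p\ge c\sqrt m$, and there your argument has a gap. You assert that $V_m-G_m$ equals $A_+ + (V_m-D_m)$ ``up to $O(1)$ extra terms''. This is not justified: since $|\La_+|+|\La_-|=2m+1$ while $|\La_+|-|\La_-|=\mathcal R_m(0)$ and $|\mathcal R_m(0)|\le\|\mathcal R_m\|_\infty\le C_1\sqrt m$, the adjustment needed to turn $\La_-$ into a set of size exactly $m$ involves $O(\sqrt m)$ terms, not $O(1)$. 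If you then bound the $L_1$-norm of these $O(\sqrt m)$ extra exponentials term by term you get $O(\sqrt m)$, which can swallow your main term $\|A_+\|_1\ge c\sqrt m$ (the constants are not in your favour: $c\approx 1/C_1$ while the adjustment contributes up to $C_1\sqrt m/2$). The repair is easy --- the extras are a sum of $N=O(\sqrt m)$ characters, so their $L_1$-norm is at most their $L_2$-norm $\sqrt N=O(m^{1/4})$ --- but as written the step is incomplete.

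The paper avoids this bookkeeping entirely by duality: once $G_m$ contains all of $\La_{-s}$, every surviving central Fourier coefficient of $f_m$ lies in $\La_s$, so $\langle f_m,\mathcal R_m\rangle = s(m+1)$ irrespective of which extras were added, and then $m+1\le\|f_m\|_p\|\mathcal R_m\|_{p'}\le C_1\sqrt m\,\|f_m\|_p$ gives $\|f_m\|_p\ge c\sqrt m$ in one line. This is shorter and, crucially, insensitive to the size of the adjustment set.
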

\begin{proof} We begin with the upper bounds. In the case $1<p\le 2$ we can follow along the lines of the proof of 
Theorem \ref{TrT2} and apply  Theorem \ref{TrT1} and Corollary \ref{IC1}. This gives 
$$
\ga_m(\al,p,\Tr^d) \le C(p) m^{h(p)-\al/p'},\quad p':= \frac{p}{p-1}.
$$
It is clear that for all $\al\in(0,1]$ and $p<2$ we have 
$$
h(p)-\al/p < h(p)-\al/p'.
$$
Therefore, we need a different argument. Denote $f_m:= f-G_m(f,\Tr^d)$. Then by Theorem \ref{TrT1} 
we obtain
\be\label{Tr9}
\|f_m\|_p \le (1+3m^{h(p)})\|f\|_p.
\ee
We now need the following well known simple lemma from \cite{Tmon}, p.92 (see \cite{DTU}, Section 7.4, for a historical discussion).  
\begin{Lemma}\label{mLqp} Let $a_1\ge a_2\ge \cdots \ge a_M\ge 0$ and $1\le q\le p\le \infty$.
Then for all $m\le M$ one has
$$
\left(\sum_{k=m}^M a_k^p\right)^{1/p} \le m^{-\bt} \left(\sum_{k=1}^M a_k^q\right)^{1/q},\quad \bt:= 1/q-1/p.
$$
\end{Lemma}
By Lemma \ref{mLqp} with $q=1$ and $p=2$ we obtain
\be\label{Tr10}
\|f_m\|_p \le \|f_m\|_2 \le m^{-1/2}\|f\|_{A_1(\Tr^d)}.
\ee
Writing $\|f_m\|_p = \|f_m\|_p^{1-\al}\|f_m\|_p^\al$ and using (\ref{Tr9}) and (\ref{Tr10}) we obtain the upper bound
with an absolute constant as an extra factor. 

We now prove the lower bounds. We use the de la Vall\'ee Poussin kernels (see, for instance, \cite{VTsparse}, Appendix, Section 7.4) for $m=1,2,\dots$
\begin{equation}\label{Tr11}
\V_m(x) := \frac {1}{m} \sum_{l=m}^{2m-1} \D_l(x) ,\quad \D_l(x):= \sum_{|k|\le l} e^{ikx}, \quad x \in \T. 
\end{equation}
It is known that (see, for instance, \cite{VTsparse}, Appendix, p.246, (7.4.14))
\begin{equation}\label{Tr12}
\|\V_m\|_p \le Cm^{1-1/p} ,\quad m=1,2,\dots , \quad 1 \le p \le \infty .  
\end{equation}
We use the notations from the above proof of Theorem \ref{TrT2}. Let $G_m(\cV_m)$ be a realization of the TGA  applied to $f=\cV_m$ with all frequencies in $\La_{-s}$ and maybe with some 
frequencies in $\La_s$. Denote $f_m := \cV_m - G_m(\cV_m)$. Then a lower bound for $\|f_m\|_p$ follows
from the relations
$$
m \le |\<f_m,{\mathcal R}_m\>| \le \|f_m\|_p \|{\mathcal R}_m\|_{p'} \le C_1m^{1/2}\|f_m\|_p .
$$
We obtain
\begin{equation}\label{Tr15}
\|f_m\|_p \ge C_3m^{1/2} . 
\end{equation}
Clearly,   we have
\be\label{Tr14}
\|\mathcal V_{m}\|_{A_1(\Tr)} \le 4m.
\ee
Note, that it is easy to check that actually $\|\mathcal V_{m}\|_{A_1(\Tr)} = 3m$. 
The relations   (\ref{Tr12}), (\ref{Tr14}), and (\ref{Tr15}) imply the required inequality in the case 
$1 \le p \le 2$. The proof of Theorem \ref{TrT3} is now complete. 
\end{proof}

\begin{Corollary}\label{TrC2} Let $1\le p\le 2$. Then for any $f\in L_p$ and for each $m\in \N$ we have
$$
\|G_m(f,\Tr^d)\|_p \le C\|f\|_p^{p/2}\|f\|_{A_1(\Tr^d)}^{1-p/2}.
$$
\end{Corollary}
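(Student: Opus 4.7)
The plan is to mimic exactly the derivation of Corollary \ref{TrC1} from Theorem \ref{TrT2}, but using Theorem \ref{TrT3} in place of Theorem \ref{TrT2}. In other words, I would pick the unique value of $\al\in[0,1]$ that makes the $m$-dependent factor in Theorem \ref{TrT3} equal to $1$, so that the resulting bound on $\|f-G_m(f,\Tr^d)\|_p$ is independent of $m$ and has precisely the desired product structure on the right.

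Concretely, for $1\le p\le 2$ we have $h(p)=1/p-1/2$, and the exponent of $m$ in Theorem \ref{TrT3} is $h(p)-\al/p$. Setting this to zero forces the choice
\[
\al \;=\; p\,h(p)\;=\;1-\tfrac{p}{2}\;\in\;[0,1].
\]
Plugging this $\al$ into the definition of $\ga_m(\al,p,\Tr^d)$ and invoking Theorem \ref{TrT3}, one obtains
\[
\|f-G_m(f,\Tr^d)\|_p \;\le\; C(p)\,\|f\|_p^{p/2}\,\|f\|_{A_1(\Tr^d)}^{1-p/2}
\]
for every $f$ with $\|f\|_{A_1(\Tr^d)}<\infty$ (and the bound is trivial otherwise).

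To finish, I would apply the triangle inequality $\|G_m(f,\Tr^d)\|_p\le \|f\|_p+\|f-G_m(f,\Tr^d)\|_p$ and absorb the first summand into the second using the elementary inequality $\|f\|_p\le \|f\|_{A_1(\Tr^d)}$, which gives
\[
\|f\|_p \;=\; \|f\|_p^{p/2}\,\|f\|_p^{1-p/2} \;\le\; \|f\|_p^{p/2}\,\|f\|_{A_1(\Tr^d)}^{1-p/2}.
\]
Combining the two estimates yields the claimed inequality with an adjusted constant $C$.

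There is essentially no obstacle here: the only decision to make is the choice of the interpolation parameter $\al$, and it is dictated by the requirement of cancelling the $m$-exponent. The heavy lifting — the upper bound in Theorem \ref{TrT3} itself, which combined the $L_p$-boundedness of the Nikol'skii/Rudin--Shapiro-type estimates with Lemma \ref{mLqp} — has already been done, so the corollary is a clean one-line consequence.
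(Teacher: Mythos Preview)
Your proposal is correct and takes essentially the same approach as the paper: the paper leaves Corollary~\ref{TrC2} without an explicit proof, but it is clearly intended as the analogue of the short argument given for Corollary~\ref{TrC1}, namely applying Theorem~\ref{TrT3} with the value of $\al$ that kills the $m$-exponent. Your choice $\al = p\,h(p) = 1 - p/2$ is exactly that value, and your triangle-inequality step to pass from $\|f-G_m(f)\|_p$ to $\|G_m(f)\|_p$ merely makes explicit what the paper leaves implicit; note also that the upper bound in Theorem~\ref{TrT3} is established with an absolute constant, which is why the corollary is stated with $C$ rather than $C(p)$.
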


\section{Unconditional and quasi-greedy bases}
\label{uq}

We recall that we assume that $\Psi :=\{\psi_n\}_{n=1}^\infty$ is a basis (Schauder basis) of a Banach space $X$ satisfying the condition $\|\psi_n\|_X \le 1$, $n=1,2,\dots$. 

\subsection{Unconditional bases} 
\label{U}

There are different equivalent definitions of the concept of {\it unconditional basis} (see, for instance, \cite{VTsparse}, Section 1.4). It is convenient for us to use the following two
equivalent definitions. We use the representation
$$
f = \sum_{n=1}^\infty c_n(f)\psi_n.
$$

{\bf (I).} $\Psi$ is an unconditional basis if for any subset of integers $\La\subset \N$ there is a bounded linear projection $P_\La$ defined by
$$
P_\La(f) := \sum_{n\in \L} c_n(f)\psi_n,  
$$
and 
$$
K:=K(\Psi):=\sup_\La\|P_\La\| <\infty .
$$

{\bf (II).} $\Psi$ is an unconditional basis if for every choice of signs $\theta:=\{\theta_n\}_{n=1}^\infty$ 
we have a bounded linear operator $M_\theta$ defined by
$$
M_\theta(f) := \sum_{n=1}^\infty \theta_nc_n(f)\psi_n,  
$$
and $\sup_\theta\|M_\theta\|<\infty$.

It is well known that for an unconditional basis $\Psi$ of the space $L_p(\Omega,\mu)$, $1<p<\infty$,
we have the following properties.

{\bf U1.} There are two positive constants $C_i(\Psi,p)$, $i=1,2$, such that for any $f\in L_p(\Omega,\mu)$ we have
$$
C_1(\Psi,p) \|f\|_p \le \|S(f,\Psi)\|_p \le C_2(\Psi,p) \|f\|_p,\quad S(f,\Psi) :=\left( \sum_{n=1}^\infty |c_n(f)\psi_n|^2\right)^{1/2}.
$$

{\bf U2.} Property U1 implies the following inequalities. Let $p^* := \min(p,2)$, then for $f\in L_p(\Omega,\mu)$ we have
$$
\|f\|_p \le C_3(\Psi,p) \left(\sum_{n=1}^\infty \|c_n(f)\psi_n\|_p^{p^*}\right)^{1/p^*},\quad 1<p<\infty.
$$

\begin{Theorem}\label{UT1} Let $2\le p < \infty$ and let $\Psi$ be an unconditional basis of $L_p(\Omega,\mu)$. Then
$$
\ga_m(\al,p,\Psi) \ll m^{-\al/2},\quad \al\in [0,1].
$$
\end{Theorem}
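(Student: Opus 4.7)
The plan is to interpolate between the two extreme cases $\alpha=0$ and $\alpha=1$, exploiting that for $p\ge 2$ we have $p^*=2$, which is exactly what makes Property U2 square-summable.

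\textbf{Step 1 (the case $\alpha=0$).} Let $\varrho\in D(f)$ with $\varrho(j)=k_j$, and let $\Lambda^c:=\{k_j\}_{j>m}$. Then $f-G_m(f,\Psi,\varrho)=P_{\Lambda^c}(f)$, where $P_{\Lambda^c}$ is the coordinate projection from the definition \textbf{(I)} of unconditionality. Hence $\|f-G_m(f)\|_p\le K(\Psi)\|f\|_p$. This costs no rate in $m$ but gives the $\|f\|_p$ factor.

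\textbf{Step 2 (the case $\alpha=1$).} First observe that since $\Psi$ is a Schauder basis with $\|\psi_n\|\le 1$, uniqueness of the expansion yields $\|f\|_{A_1(\Psi)}=\sum_n|c_n(f)|$ whenever this is finite: one direction is immediate from $f/M=\sum(|c_n(f)|/M)(\mathrm{sign}\,c_n(f))\psi_n$, and the other follows by matching coefficients of any convex representation $f/M=\sum\lambda_k g_k$ against the basis expansion of $f$. Now apply Property U2 with $p^*=2$ to the tail $f-G_m(f)=\sum_{j>m}c_{k_j}(f)\psi_{k_j}$:
\[
\|f-G_m(f)\|_p\le C_3(\Psi,p)\Bigl(\sum_{j>m}\|c_{k_j}(f)\psi_{k_j}\|_p^2\Bigr)^{1/2}\le C_3(\Psi,p)\Bigl(\sum_{j>m}|c_{k_j}(f)|^2\Bigr)^{1/2},
\]
using $\|\psi_{k_j}\|_p\le 1$. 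Since $(|c_{k_j}(f)|)_{j\ge 1}$ is the nonincreasing rearrangement of $(|c_n(f)|)$, Lemma \ref{mLqp} with $q=1,\,p=2$ gives
\[
\Bigl(\sum_{j>m}|c_{k_j}(f)|^2\Bigr)^{1/2}\le m^{-1/2}\sum_n|c_n(f)|=m^{-1/2}\|f\|_{A_1(\Psi)}.
\]

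\textbf{Step 3 (interpolation).} For arbitrary $\alpha\in[0,1]$, write $\|f-G_m(f)\|_p=\|f-G_m(f)\|_p^{1-\alpha}\|f-G_m(f)\|_p^{\alpha}$, bound the first factor by Step 1 and the second by Step 2:
\[
\|f-G_m(f)\|_p\le (K(\Psi))^{1-\alpha}(C_3(\Psi,p))^{\alpha}\,m^{-\alpha/2}\,\|f\|_p^{1-\alpha}\|f\|_{A_1(\Psi)}^{\alpha},
\]
which is exactly $\gamma_m(\alpha,p,\Psi)\ll m^{-\alpha/2}$.

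There is no genuine obstacle: the argument is essentially a clean two-point interpolation. The only mildly delicate point is the identification of $\|f\|_{A_1(\Psi)}$ with $\sum_n|c_n(f)|$ in Step 2, which needs the basis (uniqueness) assumption but does not require unconditionality.
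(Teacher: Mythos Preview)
Your proof is correct and follows essentially the same route as the paper's: bound $\|f-G_m(f)\|_p$ once by $K(\Psi)\|f\|_p$ via the projection property (I), once by $C_3(\Psi,p)m^{-1/2}\|f\|_{A_1(\Psi)}$ via Property U2 with $p^*=2$ combined with Lemma~\ref{mLqp}, and then interpolate by writing $\|f_m\|_p=\|f_m\|_p^{1-\alpha}\|f_m\|_p^{\alpha}$. The only addition is your explicit justification that $\sum_n|c_n(f)|\le\|f\|_{A_1(\Psi)}$, which the paper uses tacitly.
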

\begin{proof} By the definition (I) we get for $f_m := f-G_m(f,\Psi)$
\be\label{U1}
\|f_m\|_p \le K \|f\|_p.
\ee
By the property U2 we obtain
$$
\|f_m\|_p \le C_3(\Psi,p) \left(\sum_{n\notin \La_m(f)} \|c_n(f)\psi_n\|_p^{2}\right)^{1/2}
$$
\be\label{U2}
  \le C_3(\Psi,p) \left(\sum_{n\notin \La_m(f)} |c_n(f)|^{2}\right)^{1/2},
\ee
where $\La_m(f)$ is such that
$$
\min_{n\in \La_m(f)}|c_n(f)| \ge \max_{n\notin \La_m(f)}|c_n(f)|.
$$
Therefore, by Lemma \ref{mLqp} inequality (\ref{U2}) implies
\be\label{U3}
\|f_m\|_p \le C_3(\Psi,p) m^{-1/2}\|f\|_{A_1(\Psi)}.
\ee
Writing $\|f_m\|_p = \|f_m\|_p^{1-\al}\|f_m\|_p^\al$ and using (\ref{U1}) and (\ref{U3}) we obtain 
the required in Theorem \ref{UT1} inequality.
\end{proof}

\begin{Theorem}\label{UT1a} Let $1< p \le 2$ and let $\Psi$ be an unconditional basis of $L_p(\Omega,\mu)$. Then
$$
\ga_m(\al,p,\Psi) \ll m^{-\al(1-1/p)},\quad \al\in [0,1].
$$
\end{Theorem}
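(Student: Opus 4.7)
The plan is to follow the template established in the proof of Theorem \ref{UT1}, adapting it to the regime $1 < p \le 2$ where property U2 gives an $\ell^p$ bound (since $p^* = p$) rather than an $\ell^2$ bound. As before, I would set $f_m := f - G_m(f,\Psi)$ and derive two one-sided estimates that I then interpolate using $\|f_m\|_p = \|f_m\|_p^{1-\alpha}\|f_m\|_p^\alpha$.

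For the trivial bound, unconditionality (definition (I)) immediately yields
\be\label{Ua1}
\|f_m\|_p = \|(I - P_{\Lambda_m(f)})f\|_p \le (1+K)\|f\|_p,
\ee
where $\Lambda_m(f)$ is the index set of the $m$ largest coefficients selected by the TGA. For the $A_1$-bound, property U2 with $p^* = p$ combined with the normalization $\|\psi_n\|_p \le 1$ gives
$$
\|f_m\|_p \le C_3(\Psi,p)\Bigl(\sum_{n\notin\Lambda_m(f)} \|c_n(f)\psi_n\|_p^{p}\Bigr)^{1/p} \le C_3(\Psi,p)\Bigl(\sum_{n\notin\Lambda_m(f)} |c_n(f)|^{p}\Bigr)^{1/p}.
$$
Since the coefficients removed by TGA are the smallest in absolute value, I apply Lemma \ref{mLqp} with $q = 1$ (and exponent $p$ in place of $p$ in the lemma's statement) to the decreasing rearrangement of $\{|c_n(f)|\}$, yielding
\be\label{Ua2}
\|f_m\|_p \le C_3(\Psi,p)\, m^{-(1-1/p)} \sum_{n=1}^\infty |c_n(f)|.
\ee
Finally, since $\Psi$ is a Schauder basis the coefficient functionals are continuous linear functionals taking the value $\pm 1$ on the signed atoms $\pm\psi_k \in \Psi^{\pm}$ and vanishing on the others, so whenever $f/M \in A_1(\Psi)$ one has $\sum_n |c_n(f)| \le M$; hence $\sum_n |c_n(f)| \le \|f\|_{A_1(\Psi)}$.

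Interpolating \eqref{Ua1} and \eqref{Ua2} via
$$
\|f_m\|_p = \|f_m\|_p^{1-\alpha}\|f_m\|_p^{\alpha} \le \bigl((1+K)\|f\|_p\bigr)^{1-\alpha}\bigl(C_3(\Psi,p) m^{-(1-1/p)}\|f\|_{A_1(\Psi)}\bigr)^{\alpha}
$$
produces the claimed bound $\gamma_m(\alpha, p, \Psi) \ll m^{-\alpha(1-1/p)}$ uniformly in $\alpha \in [0,1]$. I do not anticipate a serious obstacle here: the whole argument is parallel to Theorem \ref{UT1}, and the only substantive change is that the square-function embedding U2 now has exponent $p^* = p < 2$, which both weakens the summability hypothesis and changes the exponent produced by Lemma \ref{mLqp} from $1/2$ to $1 - 1/p$. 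The mildest care is needed in the justification of $\sum |c_n(f)| \le \|f\|_{A_1(\Psi)}$, but this follows from the continuity of the basis coefficient functionals applied to any convex combination representation of $f/M$ with $M$ close to $\|f\|_{A_1(\Psi)}$.
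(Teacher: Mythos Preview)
Your proof is correct and follows essentially the same route as the paper: both use the unconditionality bound $\|f_m\|_p \ll \|f\|_p$, apply property U2 with $p^*=p$ together with Lemma~\ref{mLqp} (case $q=1$) to obtain $\|f_m\|_p \ll m^{-(1-1/p)}\|f\|_{A_1(\Psi)}$, and then interpolate via $\|f_m\|_p=\|f_m\|_p^{1-\alpha}\|f_m\|_p^{\alpha}$. The only differences are cosmetic --- your constant $1+K$ in place of $K$, and your explicit justification of $\sum_n|c_n(f)|\le\|f\|_{A_1(\Psi)}$, which the paper leaves implicit.
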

\begin{proof} This proof goes along the lines of the proof of Theorem \ref{UT1}. We only point out where we use another argument. Instead of (\ref{U2}) by the property U2 we obtain
$$
\|f_m\|_p \le C_3(\Psi,p) \left(\sum_{n\notin \La_m(f)} \|c_n(f)\psi_n\|_p^{p}\right)^{1/p}
$$
\be\label{U2a}
  \le C_3(\Psi,p) \left(\sum_{n\notin \La_m(f)} |c_n(f)|^{p}\right)^{1/p}.
\ee 
By Lemma \ref{mLqp} with $q=1$ inequality (\ref{U2a}) implies
\be\label{U3a}
\|f_m\|_p \le C_3(\Psi,p) m^{-(1-1/p)}\|f\|_{A_1(\Psi)}.
\ee
Writing $\|f_m\|_p = \|f_m\|_p^{1-\al}\|f_m\|_p^\al$ and using (\ref{U1}) and (\ref{U3a}) we obtain 
the required in Theorem \ref{UT1a} inequality.

\end{proof} 

Denote $\H :=\{H_k\}_{k=1}^\infty$ the Haar basis on $[0,1)$ normalized in $L_2([0,1],\la)$ with respect to the Lebesgue measure $\la$: $H_1 =1$ on $[0,1)$ and for $k=2^n +l$, $n=0,1,\dots$, $l=1,2,\dots,2^n$
$$
H_k = \begin{cases} 2^{n/2}, & x \in [(2l-2)2^{-n-1}, (2l-1)2^{-n-1})\\
-2^{n/2}, & x \in [(2l-1)2^{-n-1}, 2l2^{-n-1}) \\
0, & \mbox{otherwise}.\end{cases}
$$
We denote by $\H_p :=\{H_{k,p}\}_{k=1}^\infty$ the Haar basis $\H$ renormalized in $L_p([0,1],\la)$.
It is well known that the Haar basis $\H_p$ is an unconditional basis for the $L_p([0,1],\la)$, $1<p<\infty$
(see, for instance, \cite{KS}). We now show that Theorem \ref{UT1} can be improved for the Haar basis.

\begin{Theorem}\label{UT2} Let $1< p < \infty$.  Then for the space $L_p([0,1],\la)$ we have
$$
\ga_m(\al,p,\H_p) \asymp m^{-\al(1-1/p)},\quad \al\in [0,1].
$$
\end{Theorem}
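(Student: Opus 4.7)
The plan is to establish matching upper and lower bounds. The crucial Haar-specific fact will be the upper-democracy estimate
\[
\Bigl\|\sum_{n\in A}H_{n,p}\Bigr\|_p \ll |A|^{1/p} \quad (A\subset\N \text{ finite}),
\]
which is strictly stronger than what a generic unconditional basis of $L_p$ provides when $p>2$; it drives the hard case of the upper bound.

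For the lower bound, take $f := \sum_{l=1}^{2m} H_{2m+l,p}$, a sum over one full dyadic level of cardinality $2m$ (assume $2m$ is a power of $2$). The Haar functions on a single level have pairwise disjoint supports and constant modulus $(2m)^{1/p}$, so $|f|\equiv(2m)^{1/p}$ almost everywhere, giving $\|f\|_p=(2m)^{1/p}$ and $\|f\|_{A_1(\H_p)}\le 2m$. Since the $2m$ coefficients are equal, $G_m(f)$ removes any $m$ of them, leaving $m$ disjointly supported Haar functions at the same level, so $\|f-G_m(f)\|_p = m^{1/p}$. The ratio $\|f-G_m\|_p/(\|f\|_p^{1-\al}\|f\|_{A_1}^\al)$ then equals $m^{-\al(1-1/p)}$ up to constants.

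For the upper bound with $1<p\le 2$, the Haar basis is unconditional in $L_p$, so Theorem \ref{UT1a} applies directly. The nontrivial case is $p>2$, where Theorem \ref{UT1} only yields $m^{-\al/2}$. Using $\|f_m\|_p = \|f_m\|_p^{1-\al}\|f_m\|_p^\al$ together with the unconditional bound $\|f_m\|_p\le K\|f\|_p$, it suffices to prove the endpoint $(\al=1)$ inequality
\[
\|f - G_m(f,\H_p)\|_p \ll \|f\|_{A_1(\H_p)}\, m^{-(1-1/p)}.
\]
Set $M := \|f\|_{A_1(\H_p)}$; continuity of the Haar coefficient functionals together with Fatou's lemma yield $\sum_n |c_n(f,\H_p)| \le M$. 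With $A_m$ indexing the $m$ largest $|c_n|$, decompose the complement by size: $S_k := \{n\notin A_m : 2^{-k-1}M/m < |c_n|\le 2^{-k}M/m\}$, so $|S_k|\le 2^{k+1}m$. Unconditionality replaces each $c_n$ on $S_k$ by $2^{-k}M/m\cdot\sign(c_n)$ at the cost of a constant, and then the upper-democracy estimate above gives $\|P_{S_k}f\|_p \ll M\,m^{-(1-1/p)}\,2^{-k(1-1/p)}$; summing the geometric series over $k\ge 0$ (convergent because $p>1$) yields the displayed inequality.

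The main obstacle is the upper-democracy estimate itself, since property U2 only provides $|A|^{1/2}$, which is too weak for $p>2$. The plan is to derive it from property U1 (the $L_p$ square-function equivalence $\|g\|_p \asymp \|(\sum_n|c_n(g)\psi_n|^2)^{1/2}\|_p$) combined with the pointwise observation that at each dyadic level at most one $H_n$ with $n\in A$ has $x$ in its support; since $|H_{n,p}|=2^{j_n/p}$ on $\supp(H_n)$, this bounds $\sum_{n\in A}|H_{n,p}(x)|^2$ by $\sum_{j\le j_{\max}(x)}2^{2j/p} \asymp 2^{2 j_{\max}(x)/p}$, where $j_{\max}(x)$ is the largest level $j$ for which some $n\in A$ at level $j$ has $x\in\supp(H_n)$. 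A layer-cake argument via $\mu\{j_{\max}\ge j\}\le \sum_{n\in A,\,j_n\ge j}2^{-j_n}$ then gives $\int_0^1 2^{j_{\max}(x)}\,dx \le C|A|$, so $\|(\sum_{n\in A}|H_{n,p}|^2)^{1/2}\|_p \le C(p)|A|^{1/p}$, completing the proof.
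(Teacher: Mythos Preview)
Your argument is correct; the only cosmetic point is that the lower bound is written for $m$ with $2m$ a power of $2$, but passing to general $m$ by taking the next dyadic level (so that the level has between $2m$ and $4m$ functions) is routine.

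Your route differs from the paper's in both halves. For the upper bound the paper does not split into the cases $p\le 2$ and $p>2$; it invokes Lemma~\ref{QL1} (valid for any quasi-greedy basis), which gives $\|f_m\|_p\ll\sum_{n>m}a_n(f)\varphi(n)/n$, and then inserts the cited Haar fundamental-function estimate $\varphi(n,\H_p,L_p)\ll n^{1/p}$ to reach $\|f_m\|_p\ll m^{1/p-1}\|f\|_{A_1(\H_p)}$ in one stroke. Your dyadic-in-size decomposition $S_k$ together with unconditionality and the democracy bound is essentially an unpacking of that lemma specialized to unconditional bases, and you supply in addition a self-contained square-function/layer-cake proof of $\varphi(n,\H_p,L_p)\ll n^{1/p}$ rather than citing it. For the lower bound the paper takes $f=\sum_{k=1}^{2m}H_{k,p}$ (mixed levels) and appeals to two external lemmas for the norm estimates, whereas your single-level choice makes all three quantities $\|f\|_p$, $\|f\|_{A_1}$, $\|f-G_m(f)\|_p$ computable by inspection from disjoint supports. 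The paper's approach is shorter because it reuses general machinery and known Haar facts; yours is more elementary and fully self-contained, and the single-level example is a cleaner witness for the lower bound.
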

\begin{proof} We need the following simple technical lemma.

\begin{Lemma}\label{NL} For $b\in (0,1)$ and $m\in \N$ we have
$$
\sum_{n=1}^\infty \frac{1}{(n+m)n^b} \le C(b)m^{-b}.
$$
\end{Lemma}
\begin{proof} For $m=1$ the inequality holds. For $m\ge 2$ we have
$$
\sum_{n=1}^\infty \frac{1}{(n+m)n^b} \le  \sum_{n=1}^{m-1} \frac{1}{(n+m)n^b}  +\sum_{v=1}^\infty  \sum_{n=vm}^{(v+1)m-1} \frac{1}{(n+m)n^b}
$$
$$
\le \frac{1}{m}  \sum_{n=1}^{m-1} \frac{1}{n^b} + \sum_{v=1}^\infty \frac{m}{(v+1)m(vm)^b} \le C(b) m^{-b}.
$$
\end{proof}

The Haar basis is an unconditional basis and therefore for $1<p<\infty$
\be\label{U4}
\|f_m\|_p \le C_1(p) \|f\|_p.
\ee
We now use Lemma \ref{QL1} (see below), which holds for a quasi-greedy basis that is a weaker condition than the unconditionality of a basis. By Lemma \ref{QL1}  we obtain
\be\label{U5}
\|f_m\|_p \le C_2(p) \sum_{n=1}^\infty a_{n+m}(f)\ff(n)\frac{1}{n},
\ee
where $\ff(n)$ is the fundamental function of the Haar basis $\H_p$. It was proved in 
\cite{T14} (see also \cite{VTsparse}, p.35, Lemma 2.3.3) that
\be\label{U6}
\ff(n,\H_p,L_p([0,1],\la)) \le C_3(p) n^{1/p},\quad 1<p<\infty.
\ee
Relations (\ref{U5}), (\ref{U6}),  $a_n(f) \le n^{-1}\|f\|_{A_1(\H_p)}$, and Lemma \ref{NL} imply
\be\label{U7}
\|f_m\|_p \le C_4(p) m^{1/p-1}\|f\|_{A_1(\H_p)}.
\ee
Bounds (\ref{U4}) and (\ref{U7}) imply the upper bound in Theorem \ref{UT2}.

We now prove the lower bounds. For a given $m\in \N$ set
$$
f= \sum_{k=1}^{2m} H_{k,p}.
$$
Then by (\ref{U6})
\be\label{U8}
\|f\|_p \le C_3(p) (2m)^{1/p},\quad 1<p<\infty.
\ee
For any realization $G_m(f,\H_p)$ of the TGA we obtain by Lemma 2.3.4 from \cite{VTsparse}, p.35,
\be\label{U9}
\|f-G_m(f,\H_p)\|_p \ge C_5(p) m^{1/p}.
\ee
Clearly,
\be\label{U10}
\|f\|_{A_1(\H_p)} =2m.
\ee
The combination of relations (\ref{U8})--(\ref{U10}) proves the lower bounds in Theorem \ref{UT2}. 

\end{proof}

We say that $\Psi =\{\psi_k\}_{k=1}^\infty$ is $L_p$-equivalent to $\Phi =\{\phi_k\}_{k=1}^\infty$ if for any finite set $\La$ and any coefficients $c_k$, $k\in \La$, we have
\begin{equation}\label{U11}
C_1(p,\Psi,\Phi) \|\sum_{k\in \L}c_k\phi_k\|_{p} \le  \|\sum_{k\in \L}c_k\psi_k\|_{p} 
\le C_2(p,\Psi,\Phi) \|\sum_{k\in \L}c_k\phi_k\|_{p}  
\end{equation}
with two positive constants $C_1(p,\Psi,\Phi), C_2(p,\Psi,\Phi)$ which may depend on $p$, $\Psi$, and $\Phi$. For sufficient conditions on $\Psi$ to be $L_p$-equivalent to $\H_p$ see \cite{FJ} and \cite{DKT}. 

\begin{Remark}\label{UR1} Theorem \ref{UT2} holds for any basis $\Psi$, which is $L_p$-equivalent to the Haar basis $\H_p$. 
\end{Remark}

\subsection{Quasi-greedy bases}
\label{Q}

Let as above $X$ be an
infinite-dimensional separable Banach space with a norm
$\|\cdot\|:=\|\cdot\|_X$ and let $\Psi:=\{\psi_k
\}_{k=1}^{\infty}$ be a semi-normalized basis for $X$ ($0<c_0\le\|\psi_k\|\le 1,k\in
{\mathbb N}$). In this subsection we concentrate on a wider class of bases than unconditional bases -- quasi-greedy bases. The concept of quasi-greedy basis was introduced in \cite{KonT}. The reader can find a detailed discussion of quasi-greedy bases in \cite{VTsparse}, Ch.3.

\begin{Definition}\label{QD1}
The basis $\Psi$ is called quasi-greedy basis of $X$ if there exists a
constant $C=C(\Psi,X)$ such that
$$\sup_m \|G_m(f,\Psi)\|_X \leq C\|f\|_X.$$
\end{Definition}

Subsequently, Wojtaszczyk \cite{W1} proved that these are
precisely the bases for which the TGA merely converges, i.e.,
$$\lim_{n\rightarrow \infty}G_n(f)=f.$$

We now formulate some known results, which are useful in our applications. For a given element $f\in X$ we consider the expansion
$$
f=\sum_{k=1}^{\infty}c_k(f)\psi_k.
$$
Let a sequence $k_j,j=1,2,...,$ of the
positive integers be such that
$$
|c_{k_1}(f)|\geq |c_{k_2}(f)|\geq... \,\,.
$$
We use the notation
\be\label{an}
a_j(f):=|c_{k_j}(f)|
\ee
for the decreasing rearrangement of the coefficients of $f$.

The following theorem is from \cite{TYY1} (see also \cite{VTsparse}, p.70, Theorem 3.2.10). We note that in the case $p=2$
 Theorem \ref{QT1} was proved in \cite{W1}
\begin{Theorem}\label{QT1} Let $\Psi=\{\psi_k\}_{k=1}^\infty$ be a quasi-greedy basis of the $L_p$ space, $1<p<\infty$. Then there exist positive constants $C_i=C_i(p,\Psi)$, $i=1,\dots,4$, such that for each $f\in L_p$ we have
$$
C_1\sup_n n^{1/p}a_n(f)\le  \|f\|_p\le C_2 \sum_{n=1}^\infty n^{-1/2}a_n(f),\quad 2\le p <\infty;
$$
$$
C_3\sup_n n^{1/2}a_n(f)\le  \|f\|_p\le C_4 \sum_{n=1}^\infty n^{1/p-1}a_n(f),\quad 1< p \le 2.
$$
\end{Theorem}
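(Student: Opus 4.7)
Since Theorem \ref{QT1} is attributed to \cite{TYY1} (with the $p=2$ case due to \cite{W1}), I would follow the standard route from the quasi-greedy hypothesis to Lorentz-type coefficient estimates. The core preparatory step is to upgrade Definition \ref{QD1} to a bound on the truncation operator
\be\label{planTrunc}
T_\alpha(f):=\sum_{k:\,|c_k(f)|\ge \alpha}\alpha\,\sign(c_k(f))\,\psi_k,
\ee
namely $\|T_\alpha(f)\|_p\le C(p,\Psi)\|f\|_p$ for every $\alpha>0$. This is a classical Wojtaszczyk-type estimate: express $T_\alpha(f)$ as a telescoping combination of greedy sums $G_m$ applied to $f$ and to $f$ shifted by a suitable scalar multiple of $T_\alpha(f)$, and use $\sup_m\|G_m(\cdot)\|_p\le C$.

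The next step is to pin down the two-sided fundamental function of a quasi-greedy basis in $L_p$. Combining the truncation bound \eqref{planTrunc} with the fact that $L_p$ has type $\min(p,2)$ and cotype $\max(p,2)$, one shows that for every finite $\Lambda\subset\N$ with $|\Lambda|=n$ and every choice of signs $\{\epsilon_k\}_{k\in\Lambda}$,
\be\label{planDem}
c(p,\Psi)\,n^{1/\max(p,2)}\le\Bigl\|\sum_{k\in\Lambda}\epsilon_k\psi_k\Bigr\|_p\le C(p,\Psi)\,n^{1/\min(p,2)}.
\ee
The upper side comes from applying the cotype inequality and Khintchine to a random-signs averaging that is legitimate thanks to the boundedness of $T_\alpha$ across sign patterns; the lower side is the dual companion, obtained from type and duality.

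With \eqref{planDem} in hand the lower bounds follow immediately: applying $T_{a_n(f)}$ to $f$ gives $a_n(f)\,\|\sum_{k\in\Lambda_n}\sign(c_k(f))\psi_k\|_p\le C\|f\|_p$ on the set $\Lambda_n$ of indices of the top $n$ coefficients, and \eqref{planDem} converts this to $n^{1/p}a_n(f)\ll\|f\|_p$ for $p\ge 2$ and to $n^{1/2}a_n(f)\ll\|f\|_p$ for $p\le 2$. For the upper bounds, split the basis indices by dyadic level of $|c_k(f)|$, writing $f=\sum_{j\ge 0}\Delta_j$ where each $\Delta_j$ is supported on $\asymp 2^j$ indices carrying coefficient magnitudes $\asymp a_{2^j}(f)$; the triangle inequality and \eqref{planDem} give $\|\Delta_j\|_p\ll 2^{j/\min(p,2)}a_{2^j}(f)$, and summing in $j$ and converting the dyadic sum into an integral against the decreasing rearrangement produces exactly $\sum_n n^{-1/2}a_n(f)$ for $p\ge 2$ and $\sum_n n^{1/p-1}a_n(f)$ for $p\le 2$.

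The principal obstacle is establishing \eqref{planDem} for a merely quasi-greedy basis. Quasi-greediness by itself does not force democracy in an arbitrary Banach space, so the argument must genuinely exploit the type/cotype structure of $L_p$ and cannot proceed via an unconditional constant as in the proofs of Theorems \ref{UT1}--\ref{UT1a}. The remaining pieces---the truncation-operator bound, the dyadic decomposition, and the passage between decreasing rearrangement and weighted sums---are then routine.
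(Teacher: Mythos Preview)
The paper does not prove Theorem~\ref{QT1}; it is quoted from \cite{TYY1} (and \cite{W1} for $p=2$) as a known result, with a pointer to \cite{VTsparse}, p.~70, Theorem~3.2.10, for details. There is therefore no in-paper argument to compare your proposal against.

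That said, your outline matches the route taken in \cite{TYY1}: one first establishes the uniform bound on the truncation operator $T_\alpha$, then combines it with the type/cotype constants of $L_p$ to obtain two-sided, sign-uniform bounds $n^{1/\max(p,2)}\ll\bigl\|\sum_{k\in\Lambda}\epsilon_k\psi_k\bigr\|_p\ll n^{1/\min(p,2)}$ --- the step you correctly flag as the principal obstacle, since quasi-greediness alone does not force such bounds in a general Banach space. The lower inequalities in Theorem~\ref{QT1} then follow by applying $T_{a_n(f)}$ to $f$, and the upper ones by a dyadic decomposition in coefficient magnitude (the same mechanism the paper itself exploits in Lemma~\ref{crL1}). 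One point worth making explicit in your write-up is why the random-sign average controlled by the type/cotype inequalities transfers to \emph{every} choice of signs: this uses that for a quasi-greedy basis all sign patterns $\sum_{k\in\Lambda}\epsilon_k\psi_k$ have comparable norm, itself a consequence of the $T_\alpha$ bound, so the Khintchine-averaged estimate is equivalent to the uniform one.
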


Define   the  fundamental function $\varphi(m):=\varphi(m,\Psi,X)$ of a basis $\Psi$ in $X$
  as
$$
 \varphi(m,\Psi,X):=\sup_{|A|\le m}\|\sum_{k\in A}\psi_k\|.
 $$ 
 
 \begin{Definition}\label{QD2} We say that a basis $\Psi =\{\psi_k\}_{k=1}^\infty$ is a democratic basis for $X$ if there exists a constant $D:=D(X,\Psi)$ such that for any two finite sets of indices $P$ and $Q$ with the same cardinality $|P|=|Q|$ we have
$$
\|\sum_{k\in P} \psi_k\| \le D\|\sum_{k\in Q} \psi_k\|. 
$$
\end{Definition}
 
 The following Lemma \ref{QL1} was proved in \cite{VTsparse}, p.72, in the same way as Theorem \ref{QT1}.
\begin{Lemma}\label{QL1} Let $\Psi$ be a quasi-greedy basis of $X$. Then for each $f\in X$ we have
$$
\|f\| \le C(X,\Psi)\sum_{n=1}^\infty a_n(f)\ff(n)\frac{1}{n}.
$$
\end{Lemma}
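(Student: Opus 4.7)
\medskip
\noindent\textbf{Proof plan.} Following the strategy used in Theorem \ref{QT1}, I first arrange the expansion of $f$ in decreasing order of coefficient magnitude so that $f = \sum_{j\ge 1} a_j(f)\eta_j\psi_{k_j}$, where $a_j(f) = |c_{k_j}(f)|$ is nonincreasing and $\eta_j = \sign(c_{k_j}(f))$. Since a quasi-greedy basis is a convergence basis (Wojtaszczyk, \cite{W1}), this series converges to $f$ in $X$, so I can regroup it into dyadic blocks $f = \sum_{l\ge 0}B_l$ with $B_l = \sum_{j=2^l}^{2^{l+1}-1} a_j(f)\eta_j\psi_{k_j}$. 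By the triangle inequality, the lemma reduces to the block estimate
$$
\|B_l\| \le C(X,\Psi)\, a_{2^l}(f)\, \ff(2^l),\qquad l\ge 0,
$$
together with a comparison between $\sum_l a_{2^l}(f)\ff(2^l)$ and the target sum $\sum_n a_n(f)\ff(n)/n$.

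\medskip
\noindent For the block estimate I would apply Abel summation inside the block. Set $j_0 = 2^l$, $j_1 = 2^{l+1}-1$, and $S_j := \sum_{i=j_0}^{j}\eta_i\psi_{k_i}$; then
$$
B_l = a_{j_1}(f)\,S_{j_1} + \sum_{j=j_0}^{j_1-1}\bigl(a_j(f)-a_{j+1}(f)\bigr)\,S_j,
$$
and since the weights $a_{j_1}(f)$ and $a_j(f)-a_{j+1}(f)$ are nonnegative and telescope to $a_{j_0}(f) = a_{2^l}(f)$, the block norm is controlled by $a_{2^l}(f)\cdot\max_{j\in[j_0,j_1]}\|S_j\|$. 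The main ingredient is now a uniform upper bound $\|S_j\| \le C'(X,\Psi)\,\ff(2^l)$ for every partial sign-sum. This is exactly the \emph{unconditionality for constant coefficients} property of quasi-greedy bases, namely
$$
\Bigl\|\sum_{k\in A}\epsilon_k\psi_k\Bigr\| \le C'(X,\Psi)\,\Bigl\|\sum_{k\in A}\psi_k\Bigr\| \le C'(X,\Psi)\,\ff(|A|)
$$
for every finite $A\subset\N$ and every choice of signs $\epsilon_k = \pm 1$; this is a classical consequence of quasi-greediness (see \cite{VTsparse}, Ch.~3) and is precisely the place where quasi-greediness is genuinely used. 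This is the step I expect to be the main technical obstacle; once it is available, the block bound follows at once.

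\medskip
\noindent To finish, I would convert the dyadic sum into the stated sum. The fundamental function $\ff$ is subadditive (a set of size $n+m$ splits into sets of sizes $n$ and $m$), hence doubling: $\ff(2m)\le 2\ff(m)$. In particular $\ff(n) \asymp \ff(2^l)$ whenever $n \in [2^{l-1},2^l)$, and since $a_n(f)$ is nonincreasing, for each $l\ge 1$,
$$
\sum_{n=2^{l-1}}^{2^l-1} \frac{a_n(f)\,\ff(n)}{n} \gtrsim a_{2^l}(f)\,\ff(2^l)\sum_{n=2^{l-1}}^{2^l-1}\frac{1}{n} \asymp a_{2^l}(f)\,\ff(2^l).
$$
Summing in $l\ge 1$ dominates $\sum_{l\ge 1} a_{2^l}(f)\ff(2^l)$, while the $l = 0$ block contributes at most $a_1(f)\ff(1)$, which is absorbed into the $n=1$ term of the target sum. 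Combining with the block estimate yields the claimed inequality.
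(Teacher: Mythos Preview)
Your argument is correct. The paper does not actually include a proof of this lemma; it only cites \cite{VTsparse}, p.~72, with the remark that the proof goes ``in the same way as Theorem~\ref{QT1}.'' Your dyadic-block decomposition combined with Abel summation and the unconditionality-for-constant-coefficients property of quasi-greedy bases is one of the standard routes, and the passage from the dyadic sum $\sum_l a_{2^l}(f)\ff(2^l)$ to $\sum_n a_n(f)\ff(n)/n$ via subadditivity (hence doubling) of $\ff$ is clean.

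For comparison, the paper's own proof of the closely analogous Lemma~\ref{crL1} (same shape of conclusion, but under the hypothesis UB($a,K$) rather than quasi-greediness) uses a \emph{level-set} decomposition instead: it groups indices by $\cN_s=\{n:a_n(f)\ge 2^{-s}\}$, bounds each layer $\cN_s\setminus\cN_{s-1}$ via the convexity argument~\eqref{cr5}, and resums through $\sum_s 2^{-s}N_s^a$. Your index-block approach and that level-set approach are the two standard dual ways to obtain such bounds; they give the same result and neither is materially simpler. The one place where quasi-greediness genuinely enters your argument is the constant-coefficient unconditionality step $\|\sum_{k\in A}\epsilon_k\psi_k\|\le C'\|\sum_{k\in A}\psi_k\|\le C'\ff(|A|)$, and that is exactly the expected substitute for the assumed bound~\eqref{cr1} used in Lemma~\ref{crL1}.
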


\begin{Theorem}\label{QT2} Let $1<p<\infty$.  Then for any quasi-greedy basis $\Psi$ of  $L_p(\Omega,\mu)$ and any $\al \in [0,1]$ we have  
\be\label{Q1}
\ga_m(\al,p,\Psi)  \le C(p,\Psi) m^{-\al(1-1/p^*)} .
\ee
\end{Theorem}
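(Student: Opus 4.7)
The plan is to run the same two-estimate interpolation used in the proofs of Theorems \ref{UT1} and \ref{UT1a}, drawing the coefficient-side bound from Theorem \ref{QT1} in place of the unconditionality property U2. Set $f_m:=f-G_m(f,\Psi)$.

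From Definition \ref{QD1}, the $X$-side estimate is immediate:
\be
\|f_m\|_p \le \|f\|_p+\|G_m(f,\Psi)\|_p \ll \|f\|_p.
\ee
For the $A_1$-side estimate I would use two auxiliary facts. First, because a realization $G_m(f,\Psi)$ zeros out $m$ coefficients of largest absolute value, the nonincreasing rearrangement of the coefficients of $f_m$ is simply the shift $a_n(f_m)=a_{n+m}(f)$. Second, for every $f$ with $\|f\|_{A_1(\Psi)}<\infty$ one has $\sum_k|c_k(f)|\le\|f\|_{A_1(\Psi)}$: this is obvious for a finite convex combination of $\pm\psi_k$, and it extends to the closure $A_1(\Psi)$ by continuity of the coordinate functionals of the Schauder basis $\Psi$ together with Fatou's lemma. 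In particular $a_k(f)\le\|f\|_{A_1(\Psi)}/k$ for every $k$.

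Plugging this pointwise bound into the upper estimate of Theorem \ref{QT1} applied to $f_m$ gives, for $2\le p<\infty$,
\be
\|f_m\|_p \le C_2\sum_{n=1}^\infty\frac{a_{n+m}(f)}{n^{1/2}} \ll \|f\|_{A_1(\Psi)}\sum_{n=1}^\infty\frac{1}{n^{1/2}(n+m)} \ll m^{-1/2}\|f\|_{A_1(\Psi)},
\ee
and for $1<p\le 2$ the analogous computation with $n^{1/p-1}$ replacing $n^{-1/2}$ yields $\|f_m\|_p\ll m^{-(1-1/p)}\|f\|_{A_1(\Psi)}$. In both cases the exponent is $-(1-1/p^*)$, and splitting each sum at $n=m$ shows routinely that both halves contribute at that order.

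The conclusion then follows by the usual factorization: write $\|f_m\|_p=\|f_m\|_p^{1-\al}\|f_m\|_p^\al$, apply the two bounds above to the two factors, and collect constants to obtain (\ref{Q1}). The one non-routine point, and in my view the main obstacle, is the inequality $\sum_k|c_k(f)|\le\|f\|_{A_1(\Psi)}$; it is not a definitional identity and requires the mild Fatou-type limiting argument sketched above, leaning on the continuity of the coordinate functionals of $\Psi$. Once that is granted, the remainder of the proof is a direct transcription of the unconditional-basis arguments, with Theorem \ref{QT1} taking over the role previously played by property U2.
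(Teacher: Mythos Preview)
Your proposal is correct and follows essentially the same route as the paper: bound $\|f_m\|_p$ once by the quasi-greedy constant times $\|f\|_p$, once via the upper estimate of Theorem~\ref{QT1} by $m^{-(1-1/p^*)}\|f\|_{A_1(\Psi)}$, and then interpolate via $\|f_m\|_p=\|f_m\|_p^{1-\al}\|f_m\|_p^{\al}$. The only cosmetic difference is that the paper handles the tail sum by pulling out $n^{-1/2}\le m^{-1/2}$ (respectively $n^{1/p-1}\le m^{1/p-1}$) for $n>m$ and using $\sum_n a_n(f)\le\|f\|_{A_1(\Psi)}$ directly, rather than passing through the pointwise Chebyshev bound $a_k(f)\le\|f\|_{A_1(\Psi)}/k$ and estimating $\sum_n n^{-1/2}(n+m)^{-1}$ as you do.
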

\begin{proof} The proof is based on Theorem \ref{QT1}. By the Definition \ref{QD1} of the quasi-greedy basis we have for $f_m :=f- G_m(f,\Psi)$
\be\label{Q2}
\|f_m\|_p \le C(\Psi,L_p)\|f\|_p.
\ee
By Theorem \ref{QT1} and Lemma \ref{NL} we obtain in the case $2\le p <\infty$
\be\label{Q3}
\|f_m\|_p\le C_2 \sum_{n=1}^\infty n^{-1/2}a_{n+m}(f) \le C_2m^{-1/2}\|f\|_{A_1(\Psi)}
\ee
and for $1<p\le 2$
\be\label{Q4}
\|f_m\|_p\le C_4 \sum_{n=1}^\infty n^{1/p-1}a_{n+m}(f) \le C_4m^{1/p-1}\|f\|_{A_1(\Psi)}.
\ee
Combining bounds (\ref{Q2}) -- (\ref{Q4}) we obtain (\ref{Q1}).

\end{proof}

We now discuss a special subclass of quasi-greedy bases, namely, the uniformly bounded quasi-greedy bases. Existence of the uniformly bounded quasi-greedy bases in $L_p([0,1],\la)$, $1<p<\infty$, is known (see \cite{VTsparse}, p.83, Theorem 3.3.5 and p.76, Theorem 3.2.20). 

\begin{Theorem}\label{QT3} Let $1<p<\infty$.  Then for any uniformly bounded quasi-greedy basis $\Psi$ of  $L_p(\Omega,\mu)$ and any $\al \in [0,1]$ we have  
\be\label{Q1a}
\ga_m(\al,p,\Psi)  \asymp  m^{-\al/2} .
\ee
\end{Theorem}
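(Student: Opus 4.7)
My plan is to reduce everything to the fundamental function estimate $\varphi(n,\Psi,L_p)\asymp n^{1/2}$, which is the distinctive property of uniformly bounded quasi-greedy bases of $L_p$, $1<p<\infty$, and then recycle the scheme of Theorems \ref{QT2} and \ref{UT2}. For $p\ge 2$ the upper half of (\ref{Q1a}) is already contained in Theorem \ref{QT2} (there $1-1/p^{\ast}=1/2$, and uniform boundedness plays no role), so the substance of the upper bound lies in the range $1<p<2$. There I would substitute the estimate $\varphi(n)\le C(p,\Psi)n^{1/2}$ into Lemma \ref{QL1} applied to $f_m:=f-G_m(f,\Psi)$, using $a_n(f_m)=a_{n+m}(f)$ together with the elementary inequality $a_n(f)\le\|f\|_{A_1(\Psi)}/n$ (which follows from $\sum_k|c_k(f)|\le\|f\|_{A_1(\Psi)}$). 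A standard tail estimate then gives $\|f_m\|_p\ll m^{-1/2}\|f\|_{A_1(\Psi)}$, and combined with the quasi-greedy bound $\|f_m\|_p\le C\|f\|_p$ of Definition \ref{QD1} and the usual factorization $\|f_m\|_p=\|f_m\|_p^{1-\alpha}\|f_m\|_p^{\alpha}$ this delivers $\gamma_m(\alpha,p,\Psi)\ll m^{-\alpha/2}$.

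\textbf{Lower bound.} Mimicking the template of Theorems \ref{TrT2}, \ref{TrT3}, and \ref{UT2}, I would set $f:=\sum_{k\in A}\psi_k$ with $A=C\cup C'$ a disjoint union of two $m$-element sets, where $C$ is chosen to witness $\|\sum_{k\in C}\psi_k\|_p\gg m^{1/2}$ (the lower half of $\varphi\asymp n^{1/2}$). Then $\|f\|_{A_1(\Psi)}=2m$ (upper bound by convexity, lower bound from $\sum_k|c_k(f)|\le\|f\|_{A_1(\Psi)}$), while $\|f\|_p\le C\varphi(2m)\ll m^{1/2}$ by the upper half. For the realization of $G_m$ that zeroes out the coefficients indexed by $C'$, one has $\|f-G_m(f,\Psi)\|_p=\|\sum_{k\in C}\psi_k\|_p\gg m^{1/2}$, and the exponent arithmetic $1/2-(1-\alpha)/2-\alpha=-\alpha/2$ yields $\gamma_m(\alpha,p,\Psi)\gg m^{-\alpha/2}$.

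\textbf{Main obstacle.} The real content is the estimate $\varphi(n,\Psi,L_p)\asymp n^{1/2}$ for a uniformly bounded quasi-greedy basis of $L_p$, $1<p<\infty$. One half comes for free in each range from Theorem \ref{QT1}: the upper bound $\varphi(n)\ll n^{1/2}$ for $p\ge 2$ follows by applying $\|f\|_p\le C_2\sum_k k^{-1/2}a_k(f)$ to $f=\sum_{k\in A}\psi_k$, and the lower bound $\varphi(n)\gg n^{1/2}$ for $1<p\le 2$ is immediate from $\|f\|_p\ge C_3\sup_k k^{1/2}a_k(f)$. The complementary halves---the upper bound when $1<p<2$ and the lower bound when $p\ge 2$---are precisely where the uniform $L_\infty$ bound on the basis elements is indispensable (Theorem \ref{QT1} alone gives only $n^{1/p}$ in the first case and $n^{1/p}$ in the second, both on the wrong side of $n^{1/2}$); I would quote these from the existing theory of uniformly bounded quasi-greedy bases (cf.\ \cite{VTsparse}, Ch.3, whose constructions implicitly exhibit the $\asymp n^{1/2}$ behaviour) rather than reprove them in-line.
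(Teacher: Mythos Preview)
Your proposal is correct and follows essentially the same route as the paper. The paper cites the fundamental-function estimate $\varphi(n,\Psi,L_p)\asymp n^{1/2}$ for uniformly bounded quasi-greedy bases as Proposition~\ref{QP2} and packages the resulting inequality $\|g\|_p\le C\sum_n n^{-1/2}a_n(g)$ as a separate Proposition~\ref{QP1}, whereas you recover the latter by feeding $\varphi(n)\ll n^{1/2}$ directly into Lemma~\ref{QL1}; the lower-bound example $f=\sum_{k\in A}\psi_k$, $|A|=2m$, and the interpolation $\|f_m\|_p=\|f_m\|_p^{1-\alpha}\|f_m\|_p^{\alpha}$ are identical to the paper's argument.
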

\begin{proof}
 The upper bounds can be proved in the same way as Theorem \ref{QT2} has been proved. 
Note that in the case $2\le p<\infty$ the upper bounds in Theorem \ref{QT3} follow directly from Theorem \ref{QT2}. In the case $1<p\le 2$ we use instead of Theorem \ref{QT1} the following Proposition \ref{QP1}
(see \cite{VTsparse}, p.77, Proposition 3.2.22).

\begin{Proposition}\label{QP1} Let $\Psi$ be a uniformly bounded quasi-greedy basis $\Psi=\{\psi_j\}_{j=1}^\infty$ in $L_p$, $1<p<\infty$. Then we have for $f\in L_p$
\begin{equation}\label{Q5}
 C'_1(p,\Psi)\sup_n n^{1/2}a_n(f) \le  \|f\|_p\le C'_2(p,\Psi) \sum_{n=1}^\infty n^{-1/2}a_n(f).
\end{equation}
 \end{Proposition}
 
 We now prove the lower bounds. This proof is based on the following Proposition \ref{QP2} (see \cite{VTsparse}, p.76, Proposition 3.2.21).

\begin{Proposition}\label{QP2} Let $\Psi$ be a uniformly bounded quasi-greedy basis $\Psi=\{\psi_j\}_{j=1}^\infty$ in $L_p$, $1<p<\infty$. Then $\Psi$ is democratic with fundamental function $\varphi(m,\Psi,L_p)\asymp m^{1/2}$.
\end{Proposition}

Proposition \ref{QP2} means (see Definition \ref{QD2} above) that there exist two positive constants $c'(p,\Psi)$ and $C'(p,\Psi)$ such  that for any subset of indexes $\La$, $|\La| =m$, we have
\be\label{Q6}
c'(p,\Psi) m^{1/2} \le \|\sum_{k\in \La} \psi_k\|_p \le C'(p,\Psi) m^{1/2}.
\ee
As an example we take 
$$
f:= \sum_{k=1}^{2m} \psi_k\quad \text{and}\quad G_m(f,\Psi) := \sum_{k=1}^{m}\psi_k.
$$
Then we have
$$
\|f\|_p \le C'(p,\Psi)(2m)^{1/2},\quad \|f-G_m(f,\Psi)\|_p \ge c'(p,\Psi)m^{1/2},\quad \|f\|_{A_1(\Psi)} \le2m.
$$
Combining the above relations we obtain the required lower bounds.
\end{proof}

\begin{Remark}\label{QR1} In the case $2\le p<\infty$ the lower bounds of Theorem \ref{QT3} show that
Theorem \ref{QT2} is sharp in this case.
\end{Remark}

\section{A criterion for good rate of approximation}
\label{cr}

We prove here necessary and sufficient conditions on a basis $\Psi$, which provide good rate of approximation for elements from $A_1(\Psi)$. We define the following property of a basis $\Psi$.

{\bf Unconditional bound with parameters $a,K$ (UB($a,K$)).} We say that a basis $\Psi$ of $X$  
satisfies unconditional bound with parameters $a,K$ if for any finite set $\La\subset \N$ and any choice of signs 
$\ep_k=\pm 1$, $k\in\La$, we have 
\be\label{cr1}
\|\sum_{k\in\La} \ep_k\psi_k\|_X \le K|\La|^a.
\ee

It is clear that our condition $\|\psi_k\|_X\le 1$, $k=1,2,\dots$, guarantees that $\Psi$ satisfies UB($1,1$). We restrict $a\in [0,1)$. Obviously, an orthonormal basis of a Hilbert space satisfies UB($1/2,1$). There are 
non-trivial examples of UB($a,K$) bases. For instance, it is known (see \cite{T14}, \cite{VTsparse}, Ch.2, and \cite{VTbookMA}, Ch.8) that for $1<p<\infty$ the univariate Haar basis normalized in $L_p$ satisfies UB($1/p,C(p)$). The reader can find a discussion of results on sparse approximation, including some historical comments, in \cite{VTbookMA}, Ch.9. 

\begin{Theorem}\label{crT1} {\bf (A)} Let $b\in (0,1]$. Suppose that $\Psi$ is such that for any $f\in A_1(\Psi)$ there exists a permutation $\varrho \in D(f)$ with the property
\be\label{cr2}
\|f-G_m(f,\Psi,\varrho)\|_X \le m^{-b},\quad m=1,2,\dots.
\ee
Then $\Psi$ satisfies UB($1-b,2$). 

{\bf (B)} Assume that $\Psi$ satisfies UB($a,K$), $a\in [0,1)$. Then for any $f\in A_1(\Psi)$ and any permutation $\varrho \in D(f)$ we have
\be\label{cr3}
\|f-G_m(f,\Psi,\varrho)\|_X \le C(a)Km^{a-1},\quad m=1,2,\dots.
\ee
\end{Theorem}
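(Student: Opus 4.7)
\medskip\noindent
\textbf{Proof plan.}

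\textbf{Part (A).} To establish UB($1-b,2$), fix a finite $\La \subset \N$ with $|\La| = N$ and signs $\ep_k = \pm 1$, and set
$$
f := \frac{1}{N}\sum_{k \in \La}\ep_k\psi_k.
$$
Since $f$ is a convex combination of signed basis vectors, $\|f\|_{A_1(\Psi)} \le 1$, so $f \in A_1(\Psi)$. The hypothesis furnishes $\varrho \in D(f)$ with $\|f - G_m(f,\Psi,\varrho)\|_X \le m^{-b}$ for every $m \ge 1$. The nonzero coefficients of $f$ have common modulus $1/N$, so any such $\varrho$ enumerates the indices of $\La$ first; writing $S_m := \{k_1,\dots,k_m\} \subset \La$, one obtains the key estimate, for $1 \le m \le N$,
$$
\left\|\sum_{k \in \La\setminus S_m}\ep_k\psi_k\right\|_X = N\,\|f - G_m(f,\Psi,\varrho)\|_X \le N m^{-b}.
$$
I then induct on $N$: assuming UB($1-b,2$) is known for all cardinalities below $N$, the triangle inequality gives
$$
\left\|\sum_{k \in \La}\ep_k\psi_k\right\|_X \le \left\|\sum_{k \in S_m}\ep_k\psi_k\right\|_X + N m^{-b} \le 2m^{1-b} + N m^{-b},
$$
and a suitable choice of $m$ closes the induction at the stated constant.

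\textbf{Part (B).} Because $\Psi$ is a basis, the identity $\|f\|_{A_1(\Psi)} = \sum_k |c_k(f)|$ holds, so $f \in A_1(\Psi)$ forces the decreasing rearrangement $a_j := |c_{k_j}(f)|$ to satisfy $\sum_j a_j \le 1$ and hence $a_j \le 1/j$. Decompose the greedy tail $\{j > m\}$ into a possibly partial initial block followed by full dyadic blocks $B_l := \{j : 2^l \le j < 2^{l+1}\}$ for $l > L$, where $L$ is chosen so that $2^L \le m < 2^{L+1}$. On each block, Abel summation (using that $a_j - a_{j+1} \ge 0$) rewrites
$$
\sum_{j \in B_l} c_{k_j}(f)\psi_{k_j} = a_{2^{l+1}-1}\,W_{2^{l+1}-1} + \sum_{2^l \le j < 2^{l+1}-1}(a_j - a_{j+1})\,W_j,
$$
with $W_j := \sum_{2^l \le i \le j}\sign(c_{k_i}(f))\,\psi_{k_i}$ a partial signed sum over at most $2^l$ indices. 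By UB($a,K$), $\|W_j\|_X \le K\,2^{la}$, while the telescoping sum of the $a_j - a_{j+1}$ collapses to $a_{2^l} \le 2^{-l}$, yielding the block bound
$$
\left\|\sum_{j \in B_l} c_{k_j}(f)\psi_{k_j}\right\|_X \le K\,2^{la}\,a_{2^l} \le K\,2^{l(a-1)}.
$$
Summing this geometric series over $l \ge L$ (convergent because $a < 1$) and using $2^L > m/2$ to pass from $2^L$ back to $m$ turns the estimate into $\|f - G_m(f)\|_X \le 4K m^{a-1}$.

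\textbf{Main obstacle.} In (A), the delicate point is extracting the specific constant $2$: the naive balance of $2m^{1-b}$ against $Nm^{-b}$ yields a $b$-dependent constant, so closing the induction at exactly $2$ likely requires a sharper use of the whole chain $S_1 \subset S_2 \subset \cdots \subset S_N$ rather than a single triangle inequality at one index $m$, together with careful treatment of small-$N$ base cases. In (B), the analogous care is needed to fit the geometric-series contribution together with the partial block into exactly $4K$ uniformly for $a \in [0,1)$.
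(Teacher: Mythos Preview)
Your inductive scheme cannot reach the constant $2$. With the induction hypothesis you obtain
\[
\Bigl\|\sum_{k\in\La}\ep_k\psi_k\Bigr\|_X \le 2m^{1-b}+Nm^{-b},
\]
and you need this to be at most $2N^{1-b}$ for some $m<N$. Writing $m=cN$ gives $\bigl(2c^{1-b}+c^{-b}\bigr)N^{1-b}$, and the factor $2c^{1-b}+c^{-b}$ is strictly greater than $2$ for every $c\in(0,1)$ (it equals $3$ at $c=1$ and blows up as $c\to0$); for $b$ close to $1$ even the best $b$-dependent constant obtained this way diverges. Using the whole chain $S_1\subset\cdots\subset S_N$ does not help: you only ever control the \emph{tails} $\La\setminus S_m$ for one fixed permutation $\varrho$, so you cannot avoid adding back an already-bounded head via the triangle inequality.

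The paper bypasses the induction entirely with a single clean trick. Given $\La$ with $|\La|=m$ and signs $\ep$, it takes an auxiliary set $Q$ disjoint from $\La$ with $|Q|=m$ and forms
\[
f = \sum_{k\in\La}\ep_k\psi_k + (1+\delta)\sum_{k\in Q}\psi_k .
\]
Now every coefficient on $Q$ is strictly larger than every coefficient on $\La$, so any $\varrho\in D(f)$ picks all of $Q$ first; hence $f-G_m(f)=\sum_{k\in\La}\ep_k\psi_k$ exactly. Since $f/((2+\delta)m)\in A_1(\Psi)$, the hypothesis gives $\|\sum_{k\in\La}\ep_k\psi_k\|_X\le (2+\delta)m^{1-b}$, and $\delta\downarrow0$ yields the constant $2$ directly. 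The idea you are missing is this auxiliary ``decoy'' set that forces the greedy residual to coincide with the signed sum you want to bound.

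\textbf{Part (B).} Your dyadic-index/Abel-summation argument is sound in spirit and does prove the qualitative estimate, but as you note the geometric series $\sum_{l\ge L}2^{l(a-1)}$ carries the factor $(1-2^{a-1})^{-1}$, which blows up as $a\to1^-$; with the crude bound $a_{2^l}\le 2^{-l}$ you cannot get a constant uniform in $a$, let alone exactly $4K$. The paper instead decomposes by \emph{level sets of the coefficient values}: with $\cN_s=\{n:a_n(f)\ge 2^{-s}\}$ and $N_s=|\cN_s|$ it shows (Lemma~4.1) that
\[
\|f\|_X \le 2K\sum_s 2^{-s}N_s^{a} \le 4K\sum_{n\ge 1} n^{a-1}a_n(f),
\]
the second inequality coming from interchanging the order of summation. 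Applied to the residual this gives a bound of the form $4K\sum_{n>m} n^{a-1}a_n(f)\le 4Km^{a-1}\|f\|_{A_1(\Psi)}$, with the constant $4K$ independent of $a$. So the two approaches differ in whether one blocks by index or by coefficient size; the latter is what delivers the uniform constant here.
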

\begin{proof} First, we prove part {\bf (A)}. Introduce some notations. For $m\in\N$ let $V(m)$ denote the 
set of vertexes of the unit cube $[-1,1]^m$. In other words $V(m)$ consists of all vectors $\bv = (v_1,\dots,v_m)$ such that $v_j$ takes values $1$ or $-1$ for $j=1,\dots,m$. For $\La =\{k_j\}_{j=1}^m \subset \N$ and  $\bv \in V(m)$ denote
$$
D_{\La,\bv} := \sum_{j=1}^m v_j \psi_{k_j}.
$$
For given $\La$ and $\bv$ consider the function
$$
f = D_{\La,\bv} + (1+\delta) D_{Q,\bu}
$$
where $\delta>0$, $Q\subset \N$ is such that $|Q|=m$ and $\La\cap Q =\emptyset$, $\bu$ is any vector from  $V(m)$, for instance, $\bu=(1,\dots,1)$. Then $((2+\delta)m)^{-1} f\in A_1(\Psi)$ and
$$
G_m(f,\Psi) = (1+\delta) D_{Q,\bu},\quad \|D_{\La,\bv}\|_X =\|f -G_m(f,\Psi)\|_X \le (2+\delta)m^{1-b}.
$$
Taking into account that $\delta>0$ is arbitrary, we conclude the proof of part {\bf (A)}. 

Second, we prove part {\bf (B)}. The proof is based on the following Lemma \ref{crL1}. 

\begin{Lemma}\label{crL1} Let $\Psi=\{\psi_k\}_{k=1}^\infty$ be a  basis of $X$ satisfying UB($a,K$). Then for each $f\in X$ we have
$$
   \|f\|_X\le 4K \sum_{n=1}^\infty n^{a-1}a_n(f),
$$
where $a_n(f)$ are defined in (\ref{an}).
\end{Lemma}
\begin{proof} Without loss of generality assume that $f$ is normalized in such a way that  $a_1(f)< 1$. Denote ${\cN}_s:=\{n:a_n(f)\ge 2^{-s}\}$, $s\in \N$, and $N_s:=|{\cN}_s|$. 
Note, that for small $s$ the sets $\cN_s$ might be empty. Clearly, for a nonempty $\cN_s$ we have
$\cN_s=[1,N_s]$. 
 We have
\be\label{cr4}
\|f\|_X \le   \sum_{s=1}^\infty\|\sum_{n\in {\cN}_s\setminus{\cN}_{s-1}}c_{k_n}(f)\psi_{k_n} \|_X.
\ee
Our assumption that $\Psi$ satisfies the UB($a,K$) implies that for any finite $\La\subset \N$ and any coefficients $b_k$, $k\in \La$, we have
\be\label{cr5}
\|\sum_{k\in\La} b_k\psi_k\|_X \le (\max_{k\in\La}|b_k|) K|\La|^a.
\ee
Indeed, let $\La = \{k_j\}_{j=1}^n$ and $\bw= (w_1,\dots,w_n)$, $w_j := b_{k_j}(\max_{k\in\La}|b_k|)^{-1}$, $j=1,\dots,n$. Then $\bw \in [-1,1]^n$ and, therefore, $\bw$ is a convex combination of elements from $V(n)$. This proves (\ref{cr5}). Using (\ref{cr5}) we derive from (\ref{cr4})
\begin{equation}\label{cr6}
\|f\|_X\le  2K\sum_{s=1}^\infty2^{-s}  |{\cN}_s\setminus{\cN}_{s-1}|^a 
\end{equation}
 and
\be\label{cr7}
\|f\|_X \le   2K\sum_{s=1}^\infty 2^{-s}N_s^{a} \le   2K\sum_{s=1}^\infty2^{-s}\sum_{n=1}^{N_s}n^{a-1}=
2K\sum_{n=1}^\infty n^{a-1}\sum_{s:N_s\ge n} 2^{-s}.
\ee
Denote 
$$
s(n) := \min\{s\,:\, N_s\ge n\}.
$$
Then 
$$
\sum_{s:N_s\ge n} 2^{-s} \le 2^{-s(n)+1}.
$$
Note that for all $s$ such that $N_s\ge n$ we have
$$
2^{-s} \le a_{N_s} \le a_n.
$$
Therefore, we find from (\ref{cr7}) 
$$
\|f\|_X\le  4K\sum_{n=1}^\infty n^{a-1}a_n(f) .
$$
This completes the proof of Lemma \ref{crL1}.
\end{proof}

We now complete the proof of part {\bf (B)}. We have
$$
\|f-G_m(f,\Psi,\varrho)\|_X = \|\sum_{n=m+1}^\infty c_{k_n}(f)\psi_{k_n}\|_X
$$
and by Lemmas \ref{crL1} and \ref{NL} we continue
$$
\le 4K\sum_{n=1}^\infty n^{a-1}a_{n+m}(f) \le C(a)K m^{a-1}\|f\|_{A_1(\Psi)}.
$$
The proof of Theorem \ref{crT1} is now complete.

\end{proof}

\section{Replacing the $\|f\|_{A_1(\Psi)}$ by a weaker norm}
\label{W}

Consider the following norm for $1\le q<\infty$
\be\label{W1}
\|f\|_{A_q^w(\Psi)} := \left(\sup_{n\in\N} na_n(f)^q\right)^{1/q}. 
\ee
We only use the case $q=1$ in our discussion. Clearly,
\be\label{W2}
\|f\|_{A_1^w(\Psi)} \le \|f\|_{A_1(\Psi)}.
\ee
Define the following analog of the quantity $\ga_m(\al,X,\Psi)$
$$
\ga_m^w(\al,X,\Psi) := \sup_{f\in X: \|f\|_{A_1^w(\Psi)}<\infty} \sup_{\varrho\in D(f)} \frac{\|f-G_m(f,\Psi,\varrho)\|_X}{\|f\|_X^{1-\al} \|f\|_{A_1^w(\Psi)}^\al},\quad \al\in [0,1].
$$
Inequality (\ref{W2}) implies that 
\be\label{W3}
\ga_m(\al,X,\Psi) \le \ga_m^w(\al,X,\Psi). 
\ee

We now explain that the above results on the characteristic $\ga_m(\al,X,\Psi)$ hold for the  characteristic $\ga_m^w(\al,X,\Psi)$ as well. Clearly, because of (\ref{W2}) we only need to discuss the upper bounds for the $\ga_m^w(\al,X,\Psi)$. 
We begin with the trigonometric case. First, we deal with the case $2\le p< \infty$. 
 Here is an analog of Theorem \ref{TrT2}.

\begin{Theorem}\label{WT1} Let $2\le p < \infty$. Then
$$
\ga_m^w(\al,p,\Tr^d) \asymp m^{h(p)-\al/2},\quad \al\in [0,1].
$$
\end{Theorem}
\begin{proof} The lower bound follows from Theorem \ref{TrT2}. We now prove the upper bound. By Theorem \ref{TrT1}
we obtain
\be\label{W4}
\|f - G_m(f,{\mathcal T}^d)\|_p \le (1+3m^{h(p)})\|f\|_p.
\ee
Denote for brevity $H:= \|f\|_{A_1^w(\Tr^d)}$. Then $a_n(f) \le H/n$, $n\in\N$ and by the Paley inequality ($p':= p/(p-1)$)
$$
\|f - G_m(f,{\mathcal T}^d)\|_p \le C(p) \left(\sum_{n=m+1}^\infty a_n^{p'}\right)^{1/p'}
$$
\be\label{W5}
  \le C(p)H \left(\sum_{n=m+1}^\infty  n^{-p'}\right)^{1/p'} \le C_1(p) H m^{-1/p}.
\ee
Raising inequality (\ref{W4}) to the power $1-\al$ and inequality (\ref{W5}) to the power $\al$ and multiplying them, we complete the proof. 
\end{proof}

Here is an analog of Corollary \ref{TrC1}. 

\begin{Corollary}\label{WC1} Let $2\le p< \infty$. Then for any $f\in L_p$ and for each $m\in \N$ we have
$$
\|G_m(f,\Tr^d)\|_p \le C(p)\|f\|_p^{2/p}\|f\|_{A_1^w(\Tr^d)}^{1-2/p}.
$$
\end{Corollary}

Second, we deal with the case $1\le p\le 2$. 
 Here is an analog of Theorem \ref{TrT3}.
 
 \begin{Theorem}\label{WT2} Let $1\le p \le 2$. Then
$$
\ga_m^w(\al,p,\Tr^d) \asymp m^{h(p)-\al/p},\quad \al\in [0,1].
$$
\end{Theorem}
 \begin{proof} As above, we only need to prove the upper bound. The proof goes along the lines of the proof of Theorem \ref{TrT3}. We will not repeat that proof and point out that instead of Lemma \ref{mLqp} we use the following simple Lemma \ref{WL1}. 
 
 \begin{Lemma}\label{WL1} Let $a_1\ge a_2\ge \cdots \ge a_M\ge 0$ and $1\le q< p\le \infty$.
Then for all $m\le M$ one has
$$
\left(\sum_{k=m}^M a_k^p\right)^{1/p} \le C(p/q)m^{-\bt} \left(\sup_{1\le k \le M} ka_k^q\right)^{1/q},\quad \bt:= 1/q-1/p.
$$
\end{Lemma}
 
 \end{proof}
 
 Here is an analog of Corollary \ref{TrC2}. 
 
\begin{Corollary}\label{WC2} Let $1\le p\le 2$. Then for any $f\in L_p$ we have
$$
\|G_m(f,\Tr^d)\|_p \le C\|f\|_p^{p/2}\|f\|_{A_1^w(\Tr^d)}^{1-p/2}.
$$
\end{Corollary}

We now proceed to the other type of systems $\Psi$. 
 
 \begin{Remark}\label{WR1} Theorems \ref{UT1} and \ref{UT1a} hold for the $\ga_m^w(\al,p,\Psi)$ as well. 
 \end{Remark}
 \begin{proof} The proofs of Theorems \ref{UT1} and \ref{UT1a} are based on Lemma \ref{mLqp}. Replacing Lemma \ref{mLqp} by Lemma \ref{WL1} and repeating the arguments from the proofs of Theorems \ref{UT1} and \ref{UT1a},
 we obtain the statement of the Remark \ref{WR1}.
  \end{proof}
 
 In the proofs of Theorems \ref{UT2}, \ref{QT2}, and \ref{QT3} we only use the norm $\|f\|_{A_1(\Psi)}$ for getting the inequality 
 $a_n(f) \le n^{-1}\|f\|_{A_1(\Psi)}$. This means that the proofs of those theorems give the corresponding upper bounds 
 for the characteristic $\ga_m^w(\al,p,\Psi)$.   
  We formulate this observation as a remark.

  \begin{Remark}\label{WR2} Theorems \ref{UT2}, \ref{QT2}, and \ref{QT3} hold for the $\ga_m^w(\al,p,\Psi)$ as well. 
 \end{Remark}

{\bf Acknowledgements.} The author is grateful to Pablo Bern{\'a} Larossa for helpful comments. The author would like to thank the Isaac Newton Institute for Mathematical Sciences, Cambridge, for support and hospitality during the programme DREW01 where work on this paper was completed. This work was supported by EPSRC grant no EP/R014604/1.

\end{document}